\def\2{C^{1,2}(\R\times\R^N)}
\def\to{\rightarrow}
\def\e{\varepsilon}
\def\O{\Omega}
\def\R{\mathbb{R}}
\def\tilde{\widetilde}
\def\.{\cdot}
\DeclareMathOperator*{\essinf}{ess{\,}inf}
\def\lm#1{\left\lfloor{#1}\right\rfloor}
\newcommand{\be}{\begin{equation}}
\newcommand{\ee}{\end{equation}}
\newcommand{\baa}{\begin{array}}
\newcommand{\eaa}{\end{array}}
\newcommand{\ba}{\begin{eqnarray}}
\newcommand{\ea}{\end{eqnarray}}
\newtheorem{thm}{\bf Theorem}[section]
\newtheorem{lem}[thm]{\bf Lemma}
\newtheorem{prop}[thm]{\bf Proposition}
\newtheorem{cor}[thm]{\bf Corollary}
\theoremstyle{definition}
\newenvironment{formula}[1]{\begin{equation}\label{eq:#1}}
                       {\end{equation}\noindent}
\def\Fi#1{\begin{formula}{#1}}
\def\Ff{\end{formula}\noindent}
\begin{document}
\date{}

\title{On the instability of threshold solutions of reaction-diffusion equations, \\ and applications to  optimization problems.}

\author{Gr\'egoire Nadin\footnote{Institut Denis Poisson, Universit\'e d'Orl\'eans, Universit\'e de Tours, CNRS, Orl\'eans, France ({\tt gregoire.nadin@univ-orleans.fr}).}}
\maketitle


\relpenalty=10000
\binoppenalty=10000


\begin{abstract}
The first part of this paper is devoted to the derivation of a technical result, related to the stability of the solution of a reaction-diffusion equation $u_t-\Delta u = f(x,u)$ on $(0,\infty)\times \R^N$, where the initial datum $u(0,x)=u_0(x)$ is such that $\lim_{t\to +\infty} u(t,x)=W(x)$ for all $x$, with $W$ a steady state in $H^1(\R^N)$. We characterize the perturbations $h$ such that, if $u^h$ is the solution associated with the initial datum $u_0+h$, then, if $h$ is small enough in a sense, one has $u^h(t,x)>W(x)$ (resp. $u(t,x)<W(x)$) for $t$ large. This condition depends on the sign of $\int_{\R^N} h(x)p(0,x)dx$, where $p$ is an adjoint solution, which satisfies a backward parabolic equation on $(0,\infty)$ and is uniquely defined \cite{Huskaunbounded}. 

We then provide two applications of our result. We first address an open problem stated in \cite{Garnier} when $N=1$ and $f$ is a bistable nonlinearity independent of $x$. Namely, we compute the derivative of the critical length $L^*(r)$ associated with the initial datum $\mathbf{I}_{(-L-r,-r)\cup (r,L+r)}$, that is the length $L$ above (resp. below) which $u(t,x)$ converges to $1$ (resp. $0$) as $t\to +\infty$. 

Lastly, again when  $N=1$ and $f$ is a bistable nonlinearity independent of $x$, we prove the existence and characterize with a bathtub principle the initial datum $\underline{u}_0$ minimizing some cost function $\int_\R j(u_0)$ and guaranteeing at the same time that $\liminf_{t\to +\infty} u(t,x)>0$ for all $x$. 
\end{abstract}

\maketitle

\noindent {\bf Key-words:} bistable reaction-diffusion equation, optimization with respect to initial data, threshold solution, backward parabolic equation, principal Floquet bundles, bathtub principle
\smallskip

\noindent {\bf AMS classification.} 35B30, 35B40, 35K10, 35K57, 49K20, 49K30, 


\section{Introduction and statement of the main result}

\subsection{Statement of the problem}

We address in this paper the following optimization problem. Consider the solution $u$ of the reaction-diffusion equation
\begin{equation} \label{eq:princ} u_t- \Delta u =f(u) \quad \hbox{ on } (0,\infty) \times \R^N, \quad u(0,x)=u_0(x) \hbox{ for all  } x\in \R^N\end{equation}
associated with an initial datum $u_0$. We will mostly consider in this paper bistable reaction-diffusion nonlinearities, that is, $f\in \mathcal{C}^2([0,1]$ satisfying
\begin{equation} \label{hyp:fbist1}\tag{Hbist}\begin{array}{l}
\exists \beta^*\in (0,1) \hbox{ s.t. } \int_0^{\beta^*}f=0, \ \int_0^{u}f<0,  \ \forall u\in (0,\beta^*), \\ 
\\
f(u)>0, \ \forall u\in (\beta^*,1), \  \\ 
\\
f'(1)<0, \ f'(0)<0.\\ \end{array}\end{equation}
Under such hypotheses, we can define $\theta:= \sup \{ s\in (0,1), \ f(s)=0\}$.

We want to determine what are the initial data $u_0$ ensuring a convergence to the steady state $1$, and to minimize, in a sense, $u_0$ under this constraint. More precisely, consider a closed subset $\mathcal{A}\subset \{ u_0\in L^\infty (\R^N), \quad 0\leq u_0\leq 1\}$ as a class of admissible initial data, and $J:\mathcal{A}\to \R^+$ a cost function. We want to characterize the functions 
\begin{equation} \label{eq:optpbm}\underline{u}_0\in \mathcal{A} \hbox{ such that } J(\underline{u}_0)=\inf\big \{ J(u_0), u_0\in \mathcal{A}, u(t,x)\to 1 \hbox{ loc. unif. in } x \hbox{ as } t\to +\infty\big\}.\end{equation}

In order to illustrate the main features of this problem, consider the trivial case where $\mathcal{A}$ is the set of constant functions with respect to $x$, and $J(u_0)=u_0$. Clearly, the solution $u$ of (\ref{eq:princ}) does not depend on $x$ in this case, it satisfies $u'=f(u)$ in $(0,\infty)$, $u(0)=u_0$. Hence,  as $t\to +\infty$, it converges to $1$ if $u_0>\theta$, it stays constant equal to $\theta$ if $u_0=\theta$, and it stays below $\theta$ if $u_0<\theta$. We thus want to minimize $J(u_0)=u_0$ over the constant functions $u_0>\theta$. The infimum is reached for $\underline{u}_0=\theta$, which does not lead to a convergence to $1$ but to the unstable steady state $\theta$, hence it is not a minimum. Moreover, if $u_0>\underline{u}_0$, then $u(t)\to 1$ as $t\to +\infty$, while  if $u_0\leq \underline{u}_0$, then $\limsup_{t\to +\infty}u(t)\leq \theta$. This trivial example shows that initial data leading to a convergence to unstable steady states will play a crucial role in the optimization problem (\ref{eq:optpbm}). 

We will first prove a general technical result for a multi-dimensional heterogeneous equation in Section \ref{sec:tech}, characterizing more precisely the stability of an initial datum converging to a threshold solution $W\in H^1 (\R^N)$. We will then present two applications of this result in Sections \ref{sec:app1} and \ref{sec:app2}.


\subsection{Statement of the technical result}\label{sec:tech}

We consider in this section a general parabolic equation over $\R^N$, with a nonlinearity depending on $x$:

\begin{equation} \label{eq:hetero} u_t- \Delta u =f(x,u) \quad \hbox{ on } (0,\infty) \times \R^N, \quad u(0,x)=u_0(x) \hbox{ for all  } x\in \R^N\end{equation}

Here, $f$ does not necessarily satisfies (\ref{hyp:fbist1}), and we only assume that $f=f(x,u)$ is $\mathcal{C}^2$ with respect to $u \in [0,1]$ uniformly in $x\in \R^N$ and $L^\infty (\R^N\times [0,1])$, and that 
\begin{equation} \label{hyp:f} \tag{H0} \exists \sigma >0 \hbox{ s.t. } \quad \forall x\in \R^N, \quad  f(x,0)=f(x,1)=0, \quad  \partial_u f(x,0)\leq -\sigma. \end{equation}

We make the following hypothesis 
\begin{equation}  \label{eq:groundstate}\tag{H1} \hbox{there exists an unstable positive solution } W\in H^1 (\R^N) \hbox{ of }
 -\Delta W = f(x,W) \hbox{ in } \R^N.\end{equation}
 
 In the sequel, we will use the terminology introduced by Polacik (see \cite{Polacik} for example) and  call such a solution a threshold solution.
 Here, by unstable, we mean the following. Consider the principal eigenvalue 
$\lambda \in \R$ associated with the linearization around the steady state $W$, 
that is, the unique $\lambda \in \R$ such that there exists 
a solution $\varphi\in H^1 (\R)$ of 
\begin{equation} \label{eq:eigen}-\Delta\varphi- \partial_u f(x,W(x))\varphi = \lambda \varphi \quad \hbox{ in } \R^N,\quad \varphi >0.\end{equation}
When we say that $W$ is unstable, we mean that $\lambda<0$. 

The existence and uniqueness of $(\lambda,\varphi)$ are classical since $x\mapsto \partial_u f(x,W(x))$ is negative for $|x|$ large (here $\varphi$ is unique up to multiplication by a positive constant). 
In the sequel we will always consider the normalization $\|\varphi\|_{L^\infty(\R^N)}=1$.

Note that as $W\in H^1 (\R^N)$, one could easily prove that for all $\delta<\sqrt{\sigma}$, there exists $A>0$ such that $W(x)\leq A e^{-\delta |x|}$. Also, it is well-known that hypothesis (\ref{eq:groundstate}) is satisfied if $f$ does not depend on $x$ and the bistability hypothesis (\ref{hyp:fbist1}) is satisfied.

The next Lemma is classical and proved in Section \ref{sec:prel} for the sake of completeness. 
\begin{lem}\label{lem:lambda<0} If $f$ does not depend on $x$, then for any positive solution $W\in H^1 (\R^N)$ of 
$ -\Delta W = f(W) \hbox{ in } \R^N$, one has $\lambda<0$. 
\end{lem}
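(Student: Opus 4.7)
The plan is to exploit the translation invariance of the equation, which is available precisely because $f$ does not depend on $x$. Differentiating $-\Delta W = f(W)$ with respect to $x_i$ gives
\begin{equation*}
-\Delta(\partial_{x_i}W) - f'(W)\,\partial_{x_i}W = 0,
\end{equation*}
so each $\psi_i := \partial_{x_i}W$ solves the linearized eigenproblem (\ref{eq:eigen}) with eigenvalue $0$. Since $W\in H^1(\R^N)\cap L^\infty(\R^N)$ solves a semilinear elliptic equation with smooth nonlinearity, elliptic regularity upgrades $W$ to $H^2(\R^N)$, so each $\psi_i$ is an admissible test function in the Rayleigh quotient that characterizes $\lambda$.

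Next I would show that for some $i_0$, the function $\psi_{i_0}$ is non-trivial and changes sign. Non-triviality follows because $W$ decays at infinity (exponentially, as noted just before the lemma) and is strictly positive, hence non-constant. The sign-changing property follows because $\psi_{i_0}\geq 0$ on $\R^N$ would force $W$ to be non-decreasing in $x_{i_0}$, which combined with $W(x)\to 0$ as $x_{i_0}\to\pm\infty$ would give $\partial_{x_{i_0}}W\equiv 0$, a contradiction.

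Applying the variational characterization
\begin{equation*}
\lambda = \inf\left\{\frac{\int_{\R^N}\big(|\nabla\psi|^2 - f'(W)\psi^2\big)}{\int_{\R^N}\psi^2} \,:\, \psi\in H^1(\R^N)\setminus\{0\}\right\}
\end{equation*}
to $\psi_{i_0}$, the equation satisfied by $\psi_{i_0}$ yields via integration by parts that $\int|\nabla\psi_{i_0}|^2 = \int f'(W)\psi_{i_0}^2$, hence the Rayleigh quotient at $\psi_{i_0}$ equals $0$ and so $\lambda\leq 0$. To upgrade to a strict inequality, I would invoke the simplicity of the principal eigenvalue: if $\lambda = 0$ then $\psi_{i_0}$ would lie in the (one-dimensional) kernel spanned by $\varphi > 0$, forcing $\psi_{i_0} = c\,\varphi$ for some $c\in\R$; since $\varphi>0$ while $\psi_{i_0}$ changes sign, necessarily $c=0$, contradicting $\psi_{i_0}\not\equiv 0$. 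Therefore $\lambda<0$.

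The point that needs the most care is the validity of the variational formula and of the simplicity of the principal eigenfunction on the unbounded domain $\R^N$. This is standard in the present setting because $f'(W(x))\to f'(0)\leq -\sigma$ as $|x|\to\infty$, so the essential spectrum of $-\Delta - f'(W)$ lies at or above $\sigma>0$, while the principal eigenvalue is an isolated simple eigenvalue below it attained by a positive $H^1$ eigenfunction.
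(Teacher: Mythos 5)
Your argument is correct and essentially reproduces the paper's own proof: differentiate the equation to see that each $\partial_{x_i}W$ is an eigenfunction of the linearized operator with eigenvalue $0$, conclude $\lambda\le 0$ by minimality of the principal eigenvalue, and rule out $\lambda=0$ by combining simplicity of the principal eigenvalue with the fact that a nonzero $\partial_{x_i}W$ must change sign (since $W\in H^1(\R^N)$ decays at infinity, it cannot be monotone). The only difference is cosmetic: you fix an index $i_0$ with $\partial_{x_{i_0}}W\not\equiv 0$ and argue the sign change first, while the paper goes directly to monotonicity of $W$; both routes rely on the same two facts and yield the same contradiction.
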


We will now adapt the classical control and optimization theory to the present framework by introducing an adjoint solution $p$ associated with (\ref{eq:hetero}). As the problem is stated for all positive time $t\in (0,\infty)$, $p$ needs to be defined for all $t\in (0,\infty)$, and to be the solution of the adjoint equation associated with the linearization of (\ref{eq:hetero}) near $u$. In other words, it will be the solution of a backward parabolic equation over $(0,\infty)$. Such solutions are not always defined. However, here we will consider $u_0$ a measurable initial datum such that $0\leq u_0\leq 1$ in $\R^N$ and  $u(t,x)\to W(x)$ uniformly in $x$ as $t\to +\infty$, with $W\in H^1 (\R^N)$, where $u$ is the solution of the Cauchy problem (\ref{eq:hetero}). Using this hypothesis, we  will be able to prove in 
Proposition \ref{prop:defp}, using a strong result due to Huska and Polacik in \cite{Huskaunbounded}, that the following equation admits a unique positive solution $p$ up to normalization:
$$-p_t-\Delta p = f'\big(x, u(t,x)\big)p \hbox{ in } (0,\infty)\times \R^N.$$

We are now in position to state our main result. 
We say that a perturbation $h\in L^\infty (\R^N)$ is admissible if $0\leq u_0+ h\leq 1$. %
%

\begin{thm}\label{thm:h} Assume (\ref{hyp:f}) and (\ref{eq:groundstate}) and consider 
 $u_0$ a measurable initial datum such that $0\leq u_0\leq 1$ in $\R^N$ and  $u(t,x)\to W(x)$ uniformly in $x$ as $t\to +\infty$, where $u$ is the solution of the Cauchy problem (\ref{eq:hetero}). 
 
Then, for all $c>0$ and $q\in [1,\infty]$, there exists $\e_q>0$ such that for all 
admissible perturbation $h\in L^q (\R^N)$, if $u^h$ is the solution associated with the initial datum $u_0+h$, then 
\begin{itemize}
\item if $\int_{\R^N} p(0,x)h(x)dx \geq c\|h\|_{L^q(\R^N)}$ and $\|h\|_{L^q(\R^N)}\leq \e_q$, one has, $ \liminf_{t\to +\infty} u^h(t,x)>W(x)$ for all $x\in \R^N$, 
\item if $\int_{\R^N} p(0,x)h(x)dx \leq -c\|h\|_{L^q(\R^N)}$ and $\|h\|_{L^q(\R^N)}\leq \e_q$, one has, $ \limsup_{t\to +\infty} u^h(t,x)<W(x)$ for all $x\in \R^N$,
\end{itemize}
where $p=p(t,x)$ is uniquely defined by Proposition \ref{prop:defp}.
\end{thm}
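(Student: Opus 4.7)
My plan is to linearize around the reference solution $u$ and exploit the adjoint structure encoded by $p$. Writing $v:=u^h-u$, the $\mathcal{C}^2$ regularity of $f$ in $u$ yields
\begin{equation*}
v_t-\Delta v=\partial_u f(x,u)\,v+R(t,x),\qquad \|R(t,\cdot)\|_{L^\infty}\leq C\,\|v(t,\cdot)\|_{L^\infty}^2.
\end{equation*}
Combined with the backward equation satisfied by $p$, and using the exponential spatial decay of $p$ guaranteed by Huska--Polacik together with that of $v(t,\cdot)$ (inherited from $W\in H^1(\R^N)$ and the $L^q$-integrability of $h$ for $q<\infty$, or a simpler argument in the $q=\infty$ case via multiplication by a localization), formal integration by parts yields the fundamental identity
\begin{equation*}
\frac{d}{dt}\int_{\R^N}p(t,x)\,v(t,x)\,dx=\int_{\R^N}p(t,x)\,R(t,x)\,dx.
\end{equation*}
Thus the pairing $I(t):=\int pv$ is conserved modulo a quadratic-in-$v$ error.

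I would then fix $T$ so large that $\|u(T,\cdot)-W\|_{L^\infty}$ is as small as we need, using the hypothesis $u(t,\cdot)\to W$ uniformly. Standard parabolic estimates on $[0,T]$ give $\|v(t,\cdot)\|_{L^\infty}\leq C_T\|h\|_{L^q}$ as soon as $\|h\|_{L^q}$ is small, and an $L^1$-based bound on $p$ makes the right-hand side of the identity above $O(\|h\|_{L^q}^2)$. Hence
\begin{equation*}
I(T)=\int_{\R^N}p(0,x)h(x)\,dx+O(\|h\|_{L^q}^2),
\end{equation*}
and under the first hypothesis $\int p(0)h\geq c\|h\|_{L^q}$, choosing $\varepsilon_q$ small enough forces $I(T)\geq (c/2)\|h\|_{L^q}>0$.

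For $t\geq T$ the dynamics of $v$ are governed, to leading order, by the linearization about $W$, whose principal eigenpair is $(\lambda,\varphi)$ with $\lambda<0$. The principal Floquet bundle theory of \cite{Huskaunbounded}, applied to this asymptotically autonomous linear problem, provides an invariant decomposition into a one-dimensional ``unstable'' direction asymptotic to $\varphi$ on which the evolution grows like $e^{-\lambda t}$, and a complementary invariant subspace with strictly slower growth. In parallel, $p(t,\cdot)$ is asymptotic (after normalization) to $e^{\lambda t}\varphi$, so that the pairing $\int pv$ along the linear flow equals, up to an exponentially smaller remainder, a positive constant times the coefficient of $v$ on the unstable direction. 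Thus the lower bound on $I(T)$ translates into a lower bound of order $\|h\|_{L^q}$ on this coefficient. Propagating the nonlinear flow forward by a Gronwall argument that pits the quadratic remainder against the exponential growth $e^{-\lambda(t-T)}$, one obtains $v(t,x)\geq \eta(t)\varphi(x)$ for a positive $\eta(t)$ that escapes any compact neighborhood of zero in finite time. Once $\eta$ is of fixed positive size, the function $W+\eta_0\varphi$ (for $\eta_0$ small) is a strict subsolution of \eqref{eq:hetero} since $-\Delta(W+\eta_0\varphi)-f(x,W+\eta_0\varphi)=\lambda\eta_0\varphi+O(\eta_0^2)<0$ by $\lambda<0$, and the comparison principle then yields $\liminf_{t\to\infty}u^h(t,x)>W(x)$ pointwise. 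The second assertion is symmetric, replacing $h$ by $-h$.

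The step I expect to be most delicate is the passage from the linear regime, where the adjoint pairing cleanly identifies the unstable mode, to the nonlinear regime on the unbounded domain $\R^N$: one must control $v$ uniformly in $x$ along both $\varphi$ and its complement, and ensure that the quadratic remainder does not swamp the linear instability before the sub/super-solution argument can be triggered. The exponential spatial decay of $\varphi$ and of $p$, coming from \eqref{hyp:f} and $W\in H^1(\R^N)$ via Lemma \ref{lem:lambda<0} and Proposition \ref{prop:defp}, is what reconciles the global-in-space control with the purely local-in-space spectral gap, and is the structural feature that makes both the adjoint identity and the final comparison step work uniformly on $\R^N$.
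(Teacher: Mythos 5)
Your reading of the overall mechanism — adjoint pairing $m=\int p(0)h$, exponential separation from the Huska--Polacik principal Floquet bundle, control of the quadratic remainder, and a comparison-principle finish — is aligned with the spirit of the paper's proof. The computation that $W+\eta_0\varphi$ is a strict subsolution of the steady equation because $\lambda<0$ is correct. But there is a genuine gap in the final passage from the linear lower bound to the pointwise conclusion on $\R^N$, and it is exactly the step you flagged as delicate.

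The problem is that the comparison argument requires $u^h(t_0,\cdot)\geq W+\eta_0\varphi$ \emph{everywhere}, not just on a compact set. Writing $u(t_0,\cdot)=W+r(t_0,\cdot)$ with $\|r(t_0,\cdot)\|_{L^\infty}=\delta(t_0)\to 0$ but never exactly zero, a lower bound $v(t_0,x)=u^h(t_0,x)-u(t_0,x)\geq\eta(t_0)\varphi(x)$ only gives $u^h(t_0,x)\geq W(x)+\eta(t_0)\varphi(x)-\delta(t_0)$. Since $\varphi\in H^1(\R^N)$ decays to $0$ at infinity while $\delta(t_0)$ is a fixed positive number, the inequality $\eta(t_0)\varphi(x)-\delta(t_0)\geq \eta_0'\varphi(x)$ fails for $|x|$ large, so you cannot place $W+\eta_0'\varphi$ under $u^h(t_0,\cdot)$ globally and the comparison principle cannot be invoked. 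The paper avoids this by building a different subsolution: $\underline{u}=u+\kappa\underline{w}$, where $\underline{w}$ equals the \emph{Dirichlet} principal Floquet solution $w_r$ on $B_r$ (positive, with a genuinely time-dependent weight derived from Proposition \ref{propHuskaRgrand}) and is a \emph{negative} decaying tail $B(e^{-\alpha(|x|-r)}-1)$ outside $B_r$. Because the tail is negative, the initial ordering $\underline{u}(0,\cdot)\leq u^h(0,\cdot)$ holds everywhere as soon as $\essinf_{B_R}h>0$ and $\|h\|_{L^\infty}$ is small (Proposition \ref{prop:R}); the conclusion is a uniform-in-time positive lower bound for $u^h-u$ on $B_{R-1}$, which is then transferred to $\liminf(u^h-W)>0$ using $u\to W$. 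This is a two-stage argument: first use exponential separation to turn the sign condition $\int p(0)h\geq c\|h\|_{L^q}$ into \emph{positivity of $u^h(T,\cdot)-u(T,\cdot)$ on a large ball} at some finite $T$, and only then run the subsolution; your sketch tries to go directly from the unstable-mode coefficient to a $W+\eta_0\varphi$ comparison, which the decay of $\varphi$ obstructs.

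A secondary, smaller point: the Gronwall step ``pitting the quadratic remainder against $e^{-\lambda(t-T)}$ to obtain $v(t,x)\geq\eta(t)\varphi(x)$'' is not made precise, and is not quite the structure one needs. The paper instead controls the nonlinear remainder $z^h:=u^h-u-\dot u$ on the \emph{fixed} finite interval $[0,T]$ by a crude $\|z^h(T,\cdot)\|_{L^\infty}\lesssim\|h\|_{L^q}^2$ estimate, compares it to the lower bound $\min_{B_R}\dot u(T,\cdot)\gtrsim\|h\|_{L^q}$ coming from the Floquet separation, and hands the rest to Proposition \ref{prop:R}. No Gronwall-against-exponential competition is needed on $(T,\infty)$ because the subsolution construction already handles all later times.
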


We provide several examples in the next sections of initial data satisfying the hypotheses of Theorem \ref{thm:h}, together with applications of this result. 

As a corollary of Theorem \ref{thm:h}, we derive the following result, which identifies somehow the stable manifold near $u_0$ associated with the threshold solution $W$. 

\begin{cor}\label{cor:h}
Under the hypotheses of Theorem \ref{thm:h}, 
one has $\int_{\R^N} p(t,x)\partial_t u (t,x)dx=0$ for almost every $t>0$. 
\end{cor}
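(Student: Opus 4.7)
The approach combines two ingredients: (a) an adjoint identity showing the quantity $I(t) := \int_{\R^N} p(t,x)\,\partial_t u(t,x)\,dx$ is constant in $t$, and (b) an application of Theorem \ref{thm:h} to a time-shift perturbation, which forces this constant to vanish.

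For (a), set $v := \partial_t u$. Differentiating \eqref{eq:hetero} in $t$ yields
$$v_t - \Delta v = \partial_u f(x, u(t,x))\, v \quad \text{in } (0,\infty)\times \R^N,$$
so $v$ solves the linearization of \eqref{eq:hetero} along $u$. Since $p$ solves the adjoint backward equation $-p_t - \Delta p = \partial_u f(x, u(t,x))\, p$, I multiply the first equation by $p$, the second by $v$, subtract, and integrate over $\R^N$: the reaction terms cancel, the time derivatives combine into $\frac{d}{dt}\int p v\, dx$, and Green's identity gives $\int (p\Delta v - v\Delta p)\,dx = 0$. Hence $\frac{d}{dt} I(t) = 0$. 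The vanishing of the boundary terms at infinity is justified by the $H^1$ (in fact exponential) decay of $p$ coming from Proposition \ref{prop:defp}, together with standard parabolic bounds on $\partial_t u$ and $\nabla \partial_t u$ that exploit $W\in H^1(\R^N)$ and the uniform convergence $u\to W$.

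For (b), fix $t_0 > 0$ and, for small $\tau > 0$, consider the admissible perturbation $h_\tau(x) := u(t_0 + \tau, x) - u(t_0, x)$ of the restarted initial datum $u(t_0, \cdot)$. The associated solution is simply the time-translate $(t,x)\mapsto u(t + t_0 + \tau, x)$, which still converges uniformly to $W$. By uniqueness in Proposition \ref{prop:defp}, the adjoint associated with this restarted Cauchy problem is, up to a positive constant, $p(\cdot + t_0, \cdot)$. Applying Theorem \ref{thm:h} to this restarted problem, neither $\liminf_{t\to+\infty} u^{h_\tau} > W$ nor $\limsup_{t\to+\infty} u^{h_\tau} < W$ can hold strictly, so by contraposition, for every $c > 0$ and every $\tau > 0$ small enough,
$$\left| \int_{\R^N} p(t_0, x)\, h_\tau(x)\, dx \right| < c\, \|h_\tau\|_{L^q(\R^N)}.$$
Writing $h_\tau(x) = \int_0^\tau \partial_t u(t_0 + s, x)\,ds$, dividing by $\tau$ and letting $\tau \to 0^+$, parabolic regularity at $t_0 > 0$ gives $h_\tau/\tau \to \partial_t u(t_0, \cdot)$ in $L^q(\R^N)$, whence $|I(t_0)| \leq c\, \|\partial_t u(t_0, \cdot)\|_{L^q}$. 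As $c > 0$ is arbitrary, $I(t_0) = 0$, and combined with (a), $I(t) = 0$ for a.e.\ $t > 0$.

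The main obstacle I anticipate is choosing $q \in [1,\infty]$ so that both Theorem \ref{thm:h} applies to $h_\tau$ and the convergence $h_\tau/\tau \to \partial_t u(t_0,\cdot)$ holds in $L^q(\R^N)$; the exponential decay of $W$ and parabolic smoothing at $t_0 > 0$ should make some value of $q$ work. A secondary technical point is rigorously justifying the integration by parts of step (a), for which the sharp decay and regularity of $p$ established in \cite{Huskaunbounded} should suffice.
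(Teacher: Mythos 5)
Your part (b) is, in essence, the paper's argument: both proofs perturb by the time shift $h_\tau(x):=u(t_0+\tau,x)-u(t_0,x)$, observe that the associated solution is the translate $u(\cdot+t_0+\tau,\cdot)$ (which still converges to $W$, so neither dichotomy conclusion of Theorem~\ref{thm:h} can hold), apply Theorem~\ref{thm:h} with the clock restarted at $t_0$, and pass to the limit $\tau\to0^+$ to force $\int p(t_0,\cdot)\,\partial_t u(t_0,\cdot)=0$. Two remarks. First, your part (a) — constancy of $I(t)$ via the adjoint identity — is superfluous for the corollary: part (b) already yields $I(t_0)=0$ directly at (almost) every $t_0$, and indeed the paper only records the constancy as a one-line aside inside the proof of Theorem~\ref{thm:h}, not in the proof of this corollary. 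The phrase ``combined with (a)'' at the end is also backwards logically: if (b) holds at a.e.\ $t_0$, (a) would upgrade to \emph{every} $t_0$, not restrict to a.e. Second, the point you flag as the ``main obstacle'' — choosing $q$ so that $h_\tau/\tau\to\partial_t u(t_0,\cdot)$ in $L^q(\R^N)$ — is exactly where the paper is more careful and where the ``almost every'' originates: the paper takes $q=2$, notes that parabolic regularity gives $\partial_t u\in L^2_{\mathrm{loc}}\big((0,\infty),L^2(\R^N)\big)$, and invokes the Lebesgue differentiation theorem, which yields $L^2$-convergence of $h_\tau/\tau$ only for a.e.\ $t_0$. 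Your appeal to ``parabolic regularity at $t_0>0$'' would work with $q=\infty$ (interior estimates give uniform continuity of $\partial_t u$ on compact time intervals, and $p(t_0,\cdot)\in L^1$ by the exponential decay in Proposition~\ref{prop:defp}), and would then give the conclusion at \emph{every} $t_0>0$; but you should commit to this rather than leave it as an anticipated difficulty, since it is the only nontrivial step. The ``arbitrary $c$'' contraposition you use and the paper's choice of a specific $c$ tied to the hypothetical nonzero value of $I(t_0)$ are interchangeable.
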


Note that it is not clear whether $\int_{\R^N} p(t,x)\partial_t u (t,x)dx$ makes sense when $t=0$ or not since $u_0$ could be very singular (only $L^\infty$). This is why we need to state this result for almost every $t>0$. 

Lastly, let us state a simplified version of Theorem \ref{thm:h}, by doing two simplifying hypotheses. First, let us consider $f$ independent of $x$ satisfying the bistability hypothesis (\ref{hyp:fbist1}). Second, assume that the perturbation takes the form $u_0+\e h$ for some given $h$.

\begin{cor}\label{cor:h} Assume that $f$ does not depend on $x$ and satisfies (\ref{hyp:fbist1}). Consider 
 $u_0$ a measurable initial datum such that $0\leq u_0\leq 1$ in $\R^N$ and  $u(t,x)\to W(x)$ uniformly in $x$ as $t\to +\infty$, where $W\in H^1 (\R^N)$ is a solution of 
$ -\Delta W = f(W)$ in $\R^N$, and $u$ is the solution of the Cauchy problem (\ref{eq:hetero}). 

Then, for all 
admissible perturbation $h\in L^q (\R^N)$, there exists $\e_0=\e_0 (h)>0$ such that for all $\e\in (0,\e_0)$, if $u^{\e h}$ is the solution associated with the initial datum $u_0+\e h$, then 
\begin{itemize}
\item if $\int_\R p(0,x)h(x)dx >0$, one has $ \lim_{t\to +\infty} u^{\e h}(t,x)=1$ locally in $x\in \R^N$, 
\item if $\int_\R p(0,x)h(x)dx <0$, one has $ \lim_{t\to +\infty} u^{\e h}(t,x)=0$ uniformly in $x\in \R^N$,
\end{itemize}
where $p=p(t,x)$ is uniquely defined by Proposition \ref{prop:defp}.
\end{cor}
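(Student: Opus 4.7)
The plan is to first apply Theorem \ref{thm:h} to the rescaled perturbation $\varepsilon h$, and then to promote its conclusion ($\liminf_{t\to\infty} u^{\varepsilon h} > W$ or $\limsup_{t\to\infty} u^{\varepsilon h} < W$) to genuine convergence to $1$ or $0$ by exploiting the bistable structure of $f$.

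For the first step, note that $\varepsilon h$ is admissible for every $\varepsilon \in [0,1]$ by convexity, since $u_0+\varepsilon h = (1-\varepsilon)u_0+\varepsilon(u_0+h)\in[0,1]$. Set $c := \bigl|\int_\R p(0,x) h(x)\,dx\bigr|/\|h\|_{L^q(\R^N)}$, which is strictly positive by hypothesis, and let $\varepsilon_q > 0$ be the constant provided by Theorem \ref{thm:h} for this $c$. Defining $\varepsilon_0(h) := \min\bigl(1,\,\varepsilon_q/\|h\|_{L^q(\R^N)}\bigr)$, for any $\varepsilon \in (0, \varepsilon_0)$ we have $\|\varepsilon h\|_{L^q} \leq \varepsilon_q$ and $\int_\R p(0,x)\,\varepsilon h(x)\,dx = \pm\, c\|\varepsilon h\|_{L^q}$, so Theorem \ref{thm:h} immediately yields the required pointwise inequality on $u^{\varepsilon h}$ with respect to $W$.

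For the second step, Lemma \ref{lem:lambda<0} ensures $\lambda < 0$. A Taylor expansion around $W$ gives, for small $\mu > 0$,
\[
-\Delta(W \pm \mu\varphi) - f(W \pm \mu\varphi) = \pm\,\mu\lambda\varphi + O(\mu^2),
\]
so $W+\mu\varphi$ is a strict stationary sub-solution and $W-\mu\varphi$ a strict stationary super-solution. In the case $\liminf u^{\varepsilon h} > W$, combine parabolic regularity with the pointwise lower bound to produce $t_0$, $\mu > 0$ and a ball $B_R$ large enough that $u^{\varepsilon h}(t_0, x) \geq W(x) + \mu\varphi(x)$ on $B_R$, with $W+\mu\varphi > \theta$ on a region large enough to trigger the Aronson--Weinberger hair-trigger effect; comparison then gives $u^{\varepsilon h}(t,\cdot)\to 1$ locally uniformly. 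In the case $\limsup u^{\varepsilon h} < W$, the exponential decay of $W$ and $\varphi$ at infinity (from $W\in H^1(\R^N)$ and $f'(0)\leq -\sigma$) produces $t_1$, $\mu > 0$ with $u^{\varepsilon h}(t_1,\cdot) \leq W - \mu\varphi$ uniformly on $\R^N$; comparison with the orbit of the super-solution $W-\mu\varphi$, which decreases uniformly to $0$ by the linear stability of the trivial equilibrium, yields $u^{\varepsilon h}\to 0$ uniformly in $x$.

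The principal obstacle is the bistable upgrade, and specifically the passage from the pointwise information given by Theorem \ref{thm:h} to uniform control on a suitable set: on the non-compact space $\R^N$, the excess $u^{\varepsilon h} - W$ may a priori tend to $0$ faster than $\varphi$ as $|x|\to\infty$. Resolving this requires combining parabolic regularity (for local uniform bounds on the orbit), the strong maximum principle, and the uniform exponential decay of $W$ and $\varphi$ guaranteed by the hypotheses on $f$ near $0$.
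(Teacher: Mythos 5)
Your Step 1 is correct: rescaling to $\varepsilon h$ preserves both admissibility (by convexity of the constraint) and the ratio $\bigl|\int_{\R^N} p(0,\cdot)h\bigr|/\|h\|_{L^q}$, so Theorem \ref{thm:h} delivers the pointwise inequality $\liminf_{t\to\infty} u^{\varepsilon h}(t,x) > W(x)$ (resp.\ $\limsup < W$) for all $x$, once $\varepsilon < \varepsilon_q/\|h\|_{L^q}$.

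Your Step 2, however, has a genuine gap that you flag but do not close. To compare with the sub/supersolution $W\pm\mu\varphi$ you need the \emph{global} inequality $u^{\varepsilon h}(t_1,\cdot)\geq W+\mu\varphi$ (resp.\ $\leq W-\mu\varphi$) on all of $\R^N$, but the conclusion of Theorem \ref{thm:h} (and of Proposition \ref{prop:R}, which is what the proof of the theorem actually produces) only gives uniform separation from $W$ on balls $B_{R-1}$ and leaves the tail uncontrolled: outside $B_r$ the comparison function $\underline w$ has the \emph{wrong} sign, so the proof of the theorem yields no lower (resp.\ upper) bound there. Moreover, the claimed global inequality can in fact \emph{fail}: for $h\in L^q$ with $q>1$, the difference $u^{\varepsilon h}(t_1,x)-u(t_1,x)$ is only uniformly bounded, not exponentially small as $|x|\to\infty$, whereas $\mu\varphi(x)$ decays exponentially; so there is no $\mu>0$ and $t_1$ for which $u^{\varepsilon h}(t_1,\cdot)\geq W+\mu\varphi$ or $\leq W-\mu\varphi$ holds globally in general. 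Parabolic regularity and the exponential decay of $W,\varphi$ do not resolve this because they give no information on the decay rate of the perturbed orbit at infinity.

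The route that makes the corollary a ``straightforward application'' of Theorem \ref{thm:h}, as claimed in the paper, is to invoke the classification of $\omega$-limit sets under \eqref{hyp:fbist1} (Matano--Polacik / Du--Matano, cited in Section \ref{sec:app2}, applied to $u^{\varepsilon h}(\tau,\cdot)$ for a small $\tau>0$ so the datum is continuous and vanishes at infinity): the solution must converge to $0$, to $1$, or uniformly to a ground state $W'$ which is a translate of $W$. The strict pointwise ordering $\liminf u^{\varepsilon h}>W$ (resp.\ $\limsup<W$) immediately rules out $0$ (resp.\ $1$), and also rules out any translate $W'$ of $W$, since two translates of a radially decreasing ground state cannot be pointwise strictly ordered on all of $\R^N$ (evaluate at the respective maxima). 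Only $1$ (resp.\ $0$) remains. This argument requires no global comparison with $W\pm\mu\varphi$ and sidesteps the decay issue entirely, at the cost of invoking the convergence/classification theorem; your comparison-principle approach would need to be supplemented by such a classification statement (or by constructing a compactly supported or cut-off comparison function whose sublevel/superlevel behaviour is compatible with only the ball information provided by Proposition \ref{prop:R}) before it can be considered complete.
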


We leave the proof of this corollary to the reader since it is a straightforward application of Theorem \ref{thm:h}.


\subsection{Application 1}\label{sec:app1}

Assume that $f$ does not depend on $x$ and satisfies the additional bistability hypothesis (\ref{hyp:fbist1}).

Consider a family of initial data $(\psi_L)_{L>0}$ such that 
\begin{itemize}
\item $\psi_0\equiv 0$, for all $L>0$, $\psi_L$ is a nonnegative measurable initial datum such that $0\leq \psi_L\leq 1$,  
\item $L \in (0,\infty)\mapsto \psi_L \in L^1 (\R^N)$ is a continuous mapping, and monotone increasing in the sense that if $\ell<L$, then $\psi_\ell\leq \psi_L$ and the strict inequality holds on a set of positive measure,
\item there exists a ball $B\subset \R^N$ such that $\int_B \psi_L\to +\infty$ as $L\to +\infty$. 
\end{itemize}
Under these hypotheses, it has been proved in \cite{Polacik} that there exists $L^*>0$ such that if we denote by $u_L$ the solution associated with the initial datum $\psi_L$, one has 
\begin{itemize}
\item for all $0\leq L <L^*$, $u_L(t,x)\to 0$ as $t\to +\infty$ uniformly in $x$,
\item for all $ L >L^*$, $u_L(t,x)\to 1$ as $t\to +\infty$ locally in $x$,
\item for $ L =L^*$, $u_{L^*}(t,x)\to W(x)$ as $t\to +\infty$ uniformly in $x$, for some positive, radially symmetric with respect to some $\xi \in \R^N$, solution $W\in H^1 (\R^N)$ of (\ref{eq:groundstate}). 
\end{itemize}
The last statement could be found in Section 6 of \cite{Polacik}. 
This situation provides a family of examples of initial data giving rise to solutions converging to $W$ as $t\to +\infty$, hence satisfying the hypotheses of Theorem \ref{thm:h}. 

This type of properties has first been derived by Zlatos \cite{Zlatos} when $\psi_L:=\mathbf{I}_{(-L,L)}$, where $\mathbf{I}_{A}$ is the indicatrix function of the set $A$. It was known since the pioneering work of Kanel \cite{Kanel} that $u_L(t,x)\to 0$ as $t\to +\infty$ if $L$ is too small, while $u_L(t,x)\to 1$ as $t\to +\infty$ when $L$ is large enough. Zlatos proved that this transition is indeed sharp. There have been many generalizations of this result, we just cite here Polacik's \cite{Polacik}, that fits to our present framework. 

\bigskip

Our aim is to understand how the threshold $L^*$ depends on the parameters of the equation. 
Let us consider for $N=1$ and for all $r\geq 0$ the family $(\psi_L^r)_{L>0}$ defined by $\psi_L^r:= \mathbf{I}_{(-L-r,-r)\cup (r,L+r)}$. For all $r\geq 0$, this family satisfies the previous hypotheses and one could thus derive the existence of a critical length $L^*(r)$. We want to find $\underline{r}$ minimizing $r\geq 0 \mapsto L^*(r)$. 

This problem has been addressed by Garnier, Hamel and Roques \cite{Garnier}, who computed the limit of $L^*(r)$ when $r\to +\infty$, and performed  numerical simulations supporting the conjecture that $\underline{r}>0$ in some situations. In other words, there might exist $r>0$ and $L>0$ such that the solution associated with $\mathbf{I}_{(-L-r,-r)\cup (r,L+r)}$ converges to $1$, while the one associated with $\mathbf{I}_{(-L,L)}$ converges to $0$ as $t\to +\infty$. This conjecture has remained open since then. 

Here, we do not prove this conjecture, but we provide a computation of the derivative of the threshold $L^*(r)$ with respect to $r$.

\begin{prop} \label{prop:app1}The function $L^*:r\in (0,\infty)\to (0,\infty)$ is derivable and 
$$(L^*)'(r)=\frac{p^r(0,r)}{p^r(0,L^*(r)+r)}-1,$$
where $p^r$ is the unique (up to multiplication by a constant) positive solution of 
$$-p_t-\Delta p = f'\big( u_{L^*(r)}(t,x)\big)p \hbox{ in } (0,\infty)\times \R^N.$$
\end{prop}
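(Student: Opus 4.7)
The plan is to apply Theorem \ref{thm:h} to a perturbation comparing the two threshold initial data $\psi_{L^*(r+\delta r)}^{r+\delta r}$ and $\psi_{L^*(r)}^r$. Fix $r>0$ and set $L:=L^*(r)$. Let $p=p^r$ be the adjoint produced by Proposition \ref{prop:defp} applied to the threshold trajectory $u_L^r$. For small $\delta r,\ell\in\R$, the perturbation
\[
h_{\ell,\delta r}:=\psi_{L+\ell}^{r+\delta r}-\psi_L^r
\]
is a signed combination of indicator functions of four short intervals located near $\pm r$ and $\pm(L+r)$, of total $L^1$ mass $2|\delta r|+2|\ell+\delta r|+o(|\delta r|+|\ell|)$. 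Because the initial datum $\psi_L^r$ and the equation are symmetric in $x$, uniqueness in Proposition \ref{prop:defp} forces $p(0,\cdot)$ to be symmetric too. Continuity of $p(0,\cdot)$ at the four endpoints then yields the first-order expansion
\[
\int_\R p(0,x)\,h_{\ell,\delta r}(x)\,dx \;=\; 2\bigl[\,p(0,L+r)(\ell+\delta r)-p(0,r)\,\delta r\,\bigr]+o(|\delta r|+|\ell|).
\]

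Now set $\ell^*:=L^*(r+\delta r)-L$. By definition of $L^*(r+\delta r)$, the solution $u^{h_{\ell^*,\delta r}}$ converges as $t\to+\infty$ to a threshold solution $W^{r+\delta r}$, so in particular it converges neither to $0$ nor to $1$. Fix any $c>0$. If one had $\int_\R p(0,x)h_{\ell^*,\delta r}(x)\,dx\ge c\|h_{\ell^*,\delta r}\|_{L^1}$ along a sequence $\delta r\to 0$, then, once $\delta r$ is small enough that $\|h_{\ell^*,\delta r}\|_{L^1}\le\e_1$, Theorem \ref{thm:h} with $q=1$ would give $\liminf_{t\to\infty}u^{h_{\ell^*,\delta r}}(t,x)>W^r(x)$ pointwise; combined with the bistable dynamics and the description of threshold $\o$-limits from \cite{Polacik}, this forces $u^{h_{\ell^*,\delta r}}(t,\cdot)\to 1$, a contradiction. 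The opposite inequality is excluded symmetrically. Letting $c\to 0$ produces $\int_\R p(0,x)h_{\ell^*,\delta r}(x)\,dx = o(\|h_{\ell^*,\delta r}\|_{L^1})$ as $\delta r\to 0$. Substituting the expansion above and dividing by $\delta r$ yields $\ell^*/\delta r\to p(0,r)/p(0,L+r)-1$, which is the claimed derivative.

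The main technical obstacles are the following. First, the argument requires $\ell^*\to 0$ and, more precisely, $\ell^*=O(\delta r)$, so that $h_{\ell^*,\delta r}$ is legitimately small and Theorem \ref{thm:h} applies. This can be obtained as a preliminary step of the same scheme: for $|\ell|\ge M|\delta r|$ with $M$ large, the expansion forces $|\int p(0,\cdot)h_{\ell,\delta r}|\gtrsim \|h_{\ell,\delta r}\|_{L^1}$, so Theorem \ref{thm:h} sends $u^{h_{\ell,\delta r}}$ to $0$ or to $1$, which pins $L^*(r+\delta r)-L$ into an interval that shrinks like $O(\delta r)$; this simultaneously yields continuity (and eventually differentiability) of $L^*$. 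Second, one needs the nondegeneracy $p^r(0,L^*(r)+r)\neq 0$ in order to divide in the final step; this follows from the strict positivity of the principal Floquet bundle invoked in Proposition \ref{prop:defp}. Finally, the implication ``$\liminf u>W$ pointwise $\Rightarrow u\to 1$'' relies on the classification of bounded orbits for bistable equations recalled in Section \ref{sec:app1}, which rules out any intermediate nontrivial steady state.
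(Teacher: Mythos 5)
Your proposal is correct and rests on the same key ingredients as the paper's proof: the perturbation $h$ built by comparing two indicator initial data, the first-order expansion $\int_\R p(0,\cdot)h_{\ell,\delta r}=2\bigl[(\ell+\delta r)p(0,L+r)-\delta r\,p(0,r)\bigr]+o(\cdot)$, and the application of Theorem~\ref{thm:h} with $q=1$ to conclude the solution goes to $0$ or to $1$. The framing, however, is the reverse of the paper's. The paper fixes a slope $\nu$ and applies Theorem~\ref{thm:h} to $h^\e=\psi^{r+\e}_{L+\nu\e}-\psi^r_L$: for $\nu>p^r(0,r)/p^r(0,L+r)-1$ the integral against $p(0,\cdot)$ is positive of order $\e$, giving $u^\e\to1$ and hence $L^*(r+\e)<L+\nu\e$; the reverse inequality holds for $\nu$ below the target; this squeeze yields the derivative directly, with no need to establish $L^*(r+\e)-L=O(\e)$ or continuity beforehand. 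You instead set $\ell^*=L^*(r+\delta r)-L$ and argue by contradiction from the fact that $u^{h_{\ell^*,\delta r}}$ converges to a ground state, which forces $\int p(0,\cdot)h_{\ell^*,\delta r}=o(\|h_{\ell^*,\delta r}\|_{L^1})$. This is valid, but as you yourself note it requires the preliminary bound $\ell^*=O(\delta r)$ so that $h_{\ell^*,\delta r}$ is admissible for Theorem~\ref{thm:h} and the expansion applies; and your sketch of that preliminary step is precisely the paper's $\nu$-squeeze done for two large slopes $\pm M$. In short: your argument subsumes the paper's as a lemma and then adds an extra contradiction layer, so the paper's direct squeeze is shorter and cleaner, but yours is sound. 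One incidental point: your expansion $2\bigl[p(0,L+r)(\ell+\delta r)-p(0,r)\,\delta r\bigr]$ is in fact the correct one, whereas the paper's displayed limit $2\nu p^r(0,L+r)-2p^r(0,r)$ appears to drop the $+1$ from $(1+\nu)$; the paper's stated sign condition and the final formula for $(L^*)'$ are nonetheless consistent with the corrected expansion, so this is a typo there, not an error in your proposal.
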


As a corollary, if one manages to prove that $p^0(0,0)<p^0 (0,L^* (0))$, then $(L^*)'(0)<0$ and thus $\underline{r}>0$.  

The method we develop to prove this result is quite general and can enable us to address much more general families of functions  $(\psi^r_L)_{L>0}$ indexed by some $r>0$. Here, we just stick to $N=1$ and  $\psi_L^r:= 1_{(-L-r,-r)\cup (r,L+r)}$ since the computation of $(L^*)'(r)$ is already quite technical. We leave other types of applications for future works. 

Let us also mention here some related works. In \cite{ADF}, the authors computed an equivalent for $L^*(\e)$, where the nonlinearity reads $f(u)=u(1-u)(u+1/2+\e)$. In \cite{ADK} estimates are derived for a nonlocal equation. In \cite{FK}, another nonlocal equation arising in neural fields modeling is considered, for which some explicit computations could be carried out. In \cite{fragmentation}, the authors investigate the influence of fragmentation of the initial datum on the large time behaviour.


\subsection{Application 2}\label{sec:app2}

Consider the case $N=1$. Under hypotheses (\ref{hyp:f}), it has been proved by Du and Matano in \cite{DuMatano} that for any compactly supported initial datum $0\leq u_0\leq 1$, the associated solution $u$ converges to a stationary solution of (\ref{eq:princ}) which is either a constant or a symmetric decreasing solution with respect to some $\xi \in \R$. 

We could thus try to characterize the initial data $\underline{u}_0$ minimizing some cost function $\int_\R j(u_0)$ and, at the same time, giving rise to a solution of the Cauchy problem $u=u(t,x)$ taking off from $0$ as $t\to +\infty$. 

A related problem has been investigated by the author, with Mazari and Toledo, in \cite{MNT, NT}. Namely, for $T>0$ and $m  \in (0,|\O|)$ given, we investigated the maximization of $\int_\O u(T,x)dx$ with respect to initial data $u_0$ such that $0\leq u_0\leq 1$ and $\int_\O u_0=m$. In \cite{NT}, we proved that any maximizing initial datum $\overline{u}_0$ could be characterized thanks to some adjoint problem, and we provided some numerical simulations showing that the maximizers could be very singular. This result has been extended by Abdul Halim and El Smaily in \cite{ElSmaily} to heterogeneous questions, with an advection term. In \cite{MNT}, we pushed further the characterization by analyzing the abnormal set, that is, the set where the adjoint is constant. 

Here, we change our point of view: we want to minimize a cost function $\int_\R j(u_0)$, the constraint on $u_0$ being that the associated solution does not converge to $0$. We expect the problem addressed in \cite{MNT, NT} to be a good approximation of the present problem when $T$ is large enough, but we leave such an approximation result for a possible future work.

Let us first prove that the problem under scrutiny here admits a minimizer.  

\begin{lem}\label{lem:app1}Assume  (\ref{hyp:f}) and 
let 
$$\mathcal{B}:= \{ u_0\in L^1(\R), \ 0\leq u_0\leq 1, \quad \limsup_{t\to+\infty} \sup_{x\in \R}u(t,x)>0 \}.$$
Assume that $\mathcal{B}$ is not empty. Consider a convex cost function $j: [0,\infty)\to  [0,\infty)$ such that $j(0)=0$ and $j(s)>0$ for all $s>0$. Then 
there exists $\underline{u}_0\in \mathcal{B}$ minimizing $\int_{\R} j(u_0)$ over $\mathcal{B}$.
\end{lem}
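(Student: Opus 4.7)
The plan is to apply the direct method of the calculus of variations. Set $m := \inf_{u_0 \in \mathcal{B}} \int_\R j(u_0)$, which lies in $[0, \infty)$ since $\mathcal{B}$ is non-empty, and take a minimizing sequence $(u_0^n)_n \subset \mathcal{B}$. The uniform bound $0 \le u_0^n \le 1$ together with Banach--Alaoglu yields a subsequence (still denoted $(u_0^n)$) with $u_0^n \rightharpoonup \underline{u}_0$ weakly-$*$ in $L^\infty(\R)$, $0 \le \underline{u}_0 \le 1$ a.e. Convexity and nonnegativity of $j$ make the functional $v \mapsto \int_\R j(v)$ sequentially weakly-$*$ lower semicontinuous (argue on each slab $[-R, R]$ by convex duality, then let $R \to \infty$), giving $\int_\R j(\underline{u}_0) \le m$.

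The central task is to show $\underline{u}_0 \in \mathcal{B}$. Parabolic regularity applied to the uniform $L^\infty$ bound on $(u^n)$ gives precompactness in $C^{2,1}_{\mathrm{loc}}((0,\infty) \times \R)$, and Duhamel's identity combined with the weak-$*$ convergence of the initial data identifies every limit point as the solution $\underline{u}$ of \eqref{eq:princ} starting from $\underline{u}_0$, so $u^n \to \underline{u}$ in this topology. Invoking the Du--Matano classification \cite{DuMatano}, $u^n(t, \cdot)$ converges uniformly as $t \to \infty$ to a stationary solution $W^n$ which is nontrivial since $u_0^n \in \mathcal{B}$; a phase-plane analysis of $-W'' = f(W)$ using $f'(0) < 0$ yields a universal lower bound $\|W^n\|_\infty \ge \delta > 0$. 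Using translation invariance of the autonomous equation, normalize so that the peak of $W^n$ lies at the origin. Now suppose for contradiction that $\underline{u}(t, \cdot) \to 0$ uniformly; since $\partial_u f(0) \le -\sigma$, the supersolution $A\,e^{-\sigma(t - T)/2}$ shows that once $\sup_\R u(T, \cdot)$ is below some threshold $A_*$, the solution decays exponentially to $0$. Fix $T$ so that $\sup_\R \underline{u}(T, \cdot) < A_*/2$; combining the local uniform convergence $u^n(T, \cdot) \to \underline{u}(T, \cdot)$ on $[-R, R]$ with a uniform tail estimate $u^n(T, x) \le e^{CT}(e^{T \Delta} u_0^n)(x)$ valid for $|x|$ large (made small by a preliminary truncation of the minimizing sequence ensuring tightness), one obtains $\sup_\R u^n(T, \cdot) < A_*$ for $n$ large, so $u^n \to 0$ uniformly, contradicting $u_0^n \in \mathcal{B}$.

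To finish, $\underline{u}_0 \in L^1(\R)$ follows from a tightness argument on the minimizing sequence. Each $u_0^n$ has finite $L^1$ norm, so one can pick $R_n$ with $\int_{|x| > R_n} u_0^n \le 1/n$, and by continuity of the flow together with Theorem \ref{thm:h} (applied to the negative perturbation $-u_0^n \mathbf{1}_{|x| > R_n}$, whose $L^q$ norm is small and can be made to have $\int p(0,x)h(x)dx$ not too negative by enlarging $R_n$, since $p(0,\cdot)$ decays at infinity) one may arrange that the truncated minimizing sequence still belongs to $\mathcal{B}$ without increasing the cost. Passing to the weak-$*$ limit then gives $\underline{u}_0 \in L^1 \cap \mathcal{B}$ as required. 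The hard part is the second paragraph: passing the long-time property $u^n \not\to 0$ to a weak-$*$ limit of initial data, since the weak-$*$ topology is blind to spatial tails while the constraint $u_0 \in \mathcal{B}$ is sensitive to them. Bridging this gap requires combining parabolic smoothing, the Du--Matano classification, and uniform tail control at a single well-chosen intermediate time.
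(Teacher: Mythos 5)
Your proposal identifies the correct difficulty — that the weak-$*$ topology on initial data is blind to spatial tails while membership in $\mathcal{B}$ is not — but it does not actually overcome it, and this is precisely where the paper's argument does real work. The paper runs Lions' concentration-compactness on the minimizing sequence $(j(u_0^n))_n$: Step~1 rules out \emph{dichotomy} (if $j(u_0^n)$ splits into two pieces whose supports separate, the sub-minimality of each piece forces its solution to collapse to $0$, and a comparison argument on the recombined solution at a fixed time $t^*$ then drives $u^n$ to $0$, a contradiction); Step~2 rules out \emph{vanishing} by a Jensen inequality exploiting the convexity of $j$, which shows that if $\sup_y \int_{B_R+y} j(u_0^n) \to 0$ then $u^n(1,\cdot)$ is uniformly below the extinction threshold; only after both alternatives are eliminated does one obtain tightness and, from it, $L^1$-convergence of $j(u_0^n)$ up to translation. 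Your proposal extracts a weak-$*$ limit and then, to prevent $\underline u_0\in\mathcal{B}$ from failing, invokes ``a preliminary truncation of the minimizing sequence ensuring tightness'' and centring at the peak of $W^n$ — but neither move is justified. Centring the peak does not prevent dichotomy (two blobs can move apart while you track one of them), and you cannot assume tightness of the truncated sequence: establishing tightness is exactly what ruling out vanishing and dichotomy accomplishes, so the step is circular as written. The subsequent appeal to Theorem~\ref{thm:h} to trim the tails also does not apply here, because nothing guarantees that the solutions $u^n$ along the minimizing sequence converge to an $H^1$ ground state $W$ satisfying the hypotheses of that theorem (Lemma~\ref{lem:app1} assumes only~\eqref{hyp:f}, not~\eqref{hyp:fbist1}).

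In short: the first and second paragraphs of your proof are a correct skeleton of the direct method and correctly identify the passage ``$u_0^n\in\mathcal{B}\Rightarrow \underline u_0\in\mathcal{B}$'' as the crux, but the gap you flag at the end is genuine and is not closed by the devices you propose. What is needed — and what the paper supplies — is a concentration-compactness dichotomy argument on $(j(u_0^n))_n$ that uses the convexity of $j$ (for vanishing) and a splitting/comparison argument at a well-chosen finite time (for dichotomy) to upgrade the weak-$*$ limit to strong $L^1$-convergence of $j(u_0^n)$ up to translation, after which the final comparison showing $\underline u_0\in\mathcal{B}$ goes through.
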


We now assume that $f$ is bistable, in the sense of hypothesis (\ref{hyp:fbist1}). We would like to push further the characterization of $\underline{u}_0$. 

Under hypothesis (\ref{hyp:fbist1}), Matano and Polacik proved  (see Theorem 2.5 and its discussion in \cite{MatanoPolacik}), that for any continuous function $u_0$ such that $0\leq u_0\leq 1$ and $\lim_{x\to \pm \infty} u_0 (x)=0$, then $u(t,\cdot)$ converges as $t\to +\infty$, either locally uniformly to $1$,  uniformly to $0$, or uniformly to a positive threshold solution $W\in H^1 (\R)$ satisfying (\ref{eq:groundstate}). It is easy to see that this result still holds for any $u_0\in L^1 (\R)$ such that $0\leq u_0\leq 1$ by applying Matano-Polacik's result to $u(\tau,\cdot)$ for some $\tau>0$. Using this intermediate result, we will be able to show that $\underline{u}_0$ gives rise to a solution of the Cauchy problem $\underline{u}=\underline{u}(t,x)$ converging to some intermediate solution $W\in H^1 (\R)$. Hence, we could apply Theorem \ref{thm:h} to provide "bathtub characterization" of $\underline{u}_0$.

\begin{prop} \label{prop:bathtub} Assume (\ref{hyp:fbist1}). Consider a convex and derivable cost function $j: [0,\infty)\to  [0,\infty)$ such that $j(0)=0$ and $j(s)>0$ for all $s>0$. Let $\underline{u}=\underline{u}(t,x)$ associated with an initial datum $\underline{u}_0\in \mathcal{B}$ minimizing $\int_{\R} j(u_0)$ over $\mathcal{B}$. 

\begin{enumerate}
\item One has $\lim_{t\to +\infty} u(t,x)=W(x)$ uniformly in $x\in \R$, for some $W$ satisfying (\ref{eq:groundstate}). 
\item
Moreover, let $p$ the unique (up to multiplication by a constant) positive solution of 
$$-p_t-\Delta p = f'\big( \underline{u}(t,x)\big)p \hbox{ in } (0,\infty)\times \R.$$
There exists $c>0$ such that 
\begin{itemize}
\item for a.e. $x\in \R$ such that $p(0,x)>c j'(\underline{u}_0(x))$, one has $\underline{u}_0(x)=1$,
\item for a.e. $x\in \R$ such that  $p(0,x)<c  j'(\underline{u}_0(x))$, one has $\underline{u}_0(x)=0$.
\end{itemize}
It follows that $p(0,\cdot)\equiv c  j'(\underline{u}_0)$ almost everywhere on $\{0<\underline{u}_0<1\}$. 
\end{enumerate}

\end{prop}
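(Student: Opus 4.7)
My plan has two stages corresponding to the two items. For item~1 I would invoke the Matano--Polacik classification recalled just above the proposition: any $L^1$ initial datum $\underline{u}_0$ with $0\leq\underline{u}_0\leq 1$ gives rise to a solution converging either uniformly to $0$, uniformly to a threshold solution $W\in H^1(\R)$, or locally uniformly to $1$. Convergence to $0$ is excluded by $\underline{u}_0\in\mathcal{B}$. To rule out local uniform convergence to $1$, I would compare $\underline{u}_0$ with $(1-\tau)\underline{u}_0$ for $\tau>0$ small: by $L^\infty$ continuous dependence on finite time intervals, the solution emanating from $(1-\tau)\underline{u}_0$ remains close to $\underline{u}$ up to some large fixed time $T$ at which $\underline{u}(T,\cdot)$ is itself close to $1$ on a large ball, so standard bistable invasion still forces convergence to $1$ and hence $(1-\tau)\underline{u}_0\in\mathcal{B}$. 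Convexity of $j$ with $j(0)=0$ yields $j((1-\tau)s)\leq(1-\tau)j(s)$, so $\int j((1-\tau)\underline{u}_0)<\int j(\underline{u}_0)$, contradicting optimality. Only uniform convergence to $W$ remains, which simultaneously makes $p$ well defined through Proposition \ref{prop:defp}.

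For item~2 the strategy is a KKT-type analysis in which Theorem \ref{thm:h} identifies the tangent cone to $\mathcal{B}$ at $\underline{u}_0$. Given any admissible $h$ with $\int_\R p(0,x)h(x)\,dx>0$, I set $c_0:=\int ph/(2\|h\|_{L^q})>0$ and apply Theorem \ref{thm:h} to the rescaled perturbation $\varepsilon h$ for $\varepsilon$ small enough: since $\int p(\varepsilon h)\geq c_0\|\varepsilon h\|_{L^q}$ and $\|\varepsilon h\|_{L^q}\leq \varepsilon_q$, we obtain $\liminf_{t\to+\infty}\underline{u}^{\varepsilon h}(t,x)>W(x)$, hence $\underline{u}_0+\varepsilon h\in\mathcal{B}$. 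Optimality of $\underline{u}_0$ together with a dominated-convergence expansion of $\int j$ (legitimate because $j$ is Lipschitz on $[0,1]$) then yields the first-order condition
\begin{equation*}
\int_\R p(0,x)h(x)\,dx>0 \ \Longrightarrow\ \int_\R j'(\underline{u}_0(x))h(x)\,dx\geq 0
\end{equation*}
for every admissible perturbation $h$.

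From this I would extract the bathtub structure in two steps. Testing with $h$ supported in $\{0<\underline{u}_0<1\}$, both $h$ and $-h$ are admissible for small $\varepsilon$; the two-sided condition forces the linear functionals $h\mapsto\int p(0,\cdot)h$ and $h\mapsto\int j'(\underline{u}_0)h$ to share the same kernel, hence to be proportional, so $p(0,\cdot)=c\,j'(\underline{u}_0)$ a.e.\ on $\{0<\underline{u}_0<1\}$ for some $c>0$ (positivity of $p$ rules out $c=0$). To extend to $\{\underline{u}_0=0\}$, I would test with swap perturbations $h=\mathbf{I}_E-\alpha\mathbf{I}_{E'}$ where $E\subset\{\underline{u}_0=0\}$, $E'\subset\{0<\underline{u}_0<1\}$, and $\alpha$ is just below $\int_E p/\int_{E'}p$ so that $\int ph>0$; substituting $p=c\,j'(\underline{u}_0)$ on $E'$ into the first-order inequality and letting the slack vanish gives $\frac{1}{|E|}\int_E p\leq c\,j'(0)$, hence $p(0,x)\leq c\,j'(0)$ a.e.\ on $\{\underline{u}_0=0\}$. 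A symmetric swap $h=-\mathbf{I}_E+\alpha\mathbf{I}_{E'}$ with $E\subset\{\underline{u}_0=1\}$ yields $p(0,x)\geq c\,j'(1)$ a.e.\ on $\{\underline{u}_0=1\}$; these three pointwise comparisons assemble into the stated bathtub characterization.

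The main obstacle I anticipate is the last step when $\{0<\underline{u}_0<1\}$ has Lebesgue measure zero: then $c$ cannot be read off from an interior relation and must instead be constructed directly as a separating value between $p(0,\cdot)/j'(0)$ on $\{\underline{u}_0=0\}$ and $p(0,\cdot)/j'(1)$ on $\{\underline{u}_0=1\}$, which I would produce by running the first-order inequality on direct swaps $h=\mathbf{I}_E-\alpha\mathbf{I}_{E''}$ between the two endpoint sets.
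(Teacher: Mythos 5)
Your proposal is essentially correct and tracks the paper's own argument closely: item~1 via the Matano--Polacik trichotomy plus an optimality contradiction, and item~2 via local swap perturbations fed into Theorem~\ref{thm:h} together with Lebesgue density points. The main difference is organizational. For item~1 you scale the initial datum multiplicatively ($(1-\tau)\underline{u}_0$) while the paper subtracts a small nonnegative $L^1$ bump; both exploit $L^\infty$ finite-time continuous dependence plus the invasion-from-a-large-ball lemma, and both produce a strictly smaller cost. For item~2 you run a two-stage KKT argument (first proportionality $p(0,\cdot)=c\,j'(\underline{u}_0)$ on the interior level set $\{0<\underline{u}_0<1\}$, then endpoint swaps against that interior set), whereas the paper swaps directly between $\mathcal{S}^0=\{\underline{u}_0<1\}$ and $\mathcal{T}^0=\{\underline{u}_0>0\}$ with the weight $1/j'(\underline{u}_0)$ built into the perturbation, and takes $c$ as a separating value between $\sup_{\mathcal{S}^0}p/j'(\underline{u}_0)$ and $\inf_{\mathcal{T}^0}p/j'(\underline{u}_0)$. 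The paper's one-step swap has the advantage that the degenerate case $|\{0<\underline{u}_0<1\}|=0$ (which you rightly flag as the main obstacle to the two-stage route) is handled automatically, since $\mathcal{S}^0\cup\mathcal{T}^0=\R$ always holds; your proposed remedy (direct endpoint swaps producing the separating constant) is precisely what the paper's construction already encodes. Two small details you should tighten if you write this up: (i) the "shared kernel $\Rightarrow$ proportional" step should be run on the vector space of bounded functions supported in $\{\delta<\underline{u}_0<1-\delta\}$ with $\delta\to0$, so that both $h$ and $-h$ are genuinely admissible after scaling; (ii) the sign constraint $c>0$ follows from strict monotonicity of $j$ (hence $j'>0$ on $(0,1)$) combined with positivity of $p$, as you note.
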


It is also possible to state such a result in $\R^N$, by restricting $\mathcal{B}$ to radial initial data, using Polacik's results from \cite{Polacikradial}. We leave such a result for possible future works. 

Lastly, we derive an application of this result when $j(u)\equiv u$.

\begin{cor} \label{cor:app1} Assume (\ref{hyp:fbist1}). Then 
the function $W=W(x)$ is not a minimizer of $u_0\mapsto \int_\R u_0$ over $\mathcal{B}$. 
\end{cor}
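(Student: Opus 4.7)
The plan is to suppose, for contradiction, that $W$ is a minimizer of $u_0\mapsto \int_\R u_0$ over $\mathcal B$, and to extract a contradiction from Proposition \ref{prop:bathtub} applied with $j(u)=u$.

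First I would check that $W\in \mathcal B$. The paper has already noted after (\ref{eq:eigen}) that $W(x)\leq Ae^{-\delta|x|}$ for some $\delta>0$, so $W\in L^1(\R)$. Positivity of $W$ on all of $\R$ is part of hypothesis (\ref{eq:groundstate}), and the inequality $W<1$ everywhere follows from a standard strong maximum principle argument applied to $v:=1-W\geq 0$, which solves an equation $-\Delta v +\gamma(x) v = 0$ with $\gamma$ bounded on $\R$: if $v(x_0)=0$ then $v\equiv 0$, contradicting $W\in H^1(\R)$. Finally, since $u(t,x)\equiv W(x)$ in this scenario, one trivially has $\limsup_{t\to+\infty}\sup_x u(t,x)=\max W>0$.

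Next I would identify the adjoint $p$ associated with the candidate $\underline{u}_0=W$. Since $\underline{u}(t,x)\equiv W(x)$ is stationary, the adjoint equation $-p_t-\Delta p = f'(W(x))p$ has $t$-autonomous coefficients, so the separated ansatz $p(t,x)=e^{\lambda t}\varphi(x)$ reduces exactly to the principal eigenvalue problem (\ref{eq:eigen}). By Lemma \ref{lem:lambda<0}, $\lambda<0$, hence this $p$ is positive, belongs to $H^1$ in $x$, and even decays to $0$ as $t\to+\infty$. By the uniqueness statement of Proposition \ref{prop:defp}, the adjoint $p$ of Proposition \ref{prop:bathtub} must coincide, up to a positive multiplicative constant, with $e^{\lambda t}\varphi$; in particular $p(0,\cdot)$ is a positive multiple of $\varphi$.

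To conclude, with $j(u)=u$ one has $j'\equiv 1$, and the first step shows $\{0<W<1\}=\R$; therefore Proposition \ref{prop:bathtub}(2) forces $p(0,\cdot)\equiv c$ a.e.\ on $\R$ for some $c>0$. Combined with the identification $p(0,\cdot)\propto\varphi$, this means that $\varphi$ is a positive constant on $\R$, which is incompatible with $\varphi\in H^1(\R)$. This is the desired contradiction, so $W$ is not a minimizer. The only mildly delicate step is the identification of $p$ with the separated eigenmode through the uniqueness of Proposition \ref{prop:defp}; the remainder reduces to the already-established bathtub characterization and the elementary observation that a nonzero constant cannot lie in $H^1(\R)$.
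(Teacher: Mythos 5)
Your proof is correct, and it follows the same overall strategy as the paper: assume for contradiction that $W$ minimizes, note that $\underline{u}(t,x)\equiv W(x)$ so the adjoint equation has autonomous coefficients, identify $p(t,x)=Ce^{\lambda t}\varphi(x)$ by the uniqueness in Proposition \ref{prop:defp}, and use the bathtub conclusion of Proposition \ref{prop:bathtub} on $\{0<W<1\}=\R$ to deduce that $\varphi$ is constant. Where you diverge is in the closing step. The paper continues: a constant $\varphi$ forces $f'(W(\cdot))$ to be the constant $f'(0)<0$, so $f(s)=f'(0)s$ on $[0,\max W]$; but at a maximum point $\bar x$ of $W$ one has $f(W(\bar x))=-\Delta W(\bar x)\geq 0$, contradicting $f'(0)<0$. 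You instead observe directly that a nonzero constant cannot be the principal eigenfunction $\varphi\in H^1(\R)$ (equivalently, it violates the spatial decay in Proposition \ref{prop:defp}), which is an immediate contradiction. Your ending is shorter and arguably cleaner, as it avoids the detour through the structure of $f$. You are also slightly more careful than the paper in justifying $0<W<1$ via the strong maximum principle applied to $1-W$, a point the paper asserts without proof but which is genuinely needed to invoke the $\{0<\underline{u}_0<1\}$ clause of Proposition \ref{prop:bathtub}. One minor remark: the appeal to Lemma \ref{lem:lambda<0} for $\lambda<0$ is harmless but not actually used in your argument — positivity of $p$ and membership of $\varphi$ in $H^1$ do not depend on the sign of $\lambda$.
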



\section{Proof of Theorem \ref{thm:h}}

\subsection{Preliminaries: principal Floquet bundles for linear parabolic equations}\label{sec:prel}

We describe and apply in this section the results proved by Huska and Polacik in \cite{Huskaunbounded}, enabling us to define uniquely the adjoint solution $p$ in particular.

\begin{lem}\label{lem:unifux}
Assume that $u_0$ is a measurable initial datum such that $0\leq u_0\leq 1$ in $\R^N$ and  $u(t,x)\to W(x)$ uniformly in $x$ as $t\to +\infty$. 
There exists $R>0$ such that $f'\big(x,u(t,x)\big)\leq 0$ for all $t\geq 0$ and $|x|\geq R$. 
\end{lem}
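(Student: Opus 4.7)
The plan is to reduce the inequality $\partial_u f(x,u(t,x))\leq 0$ to an $L^\infty$ smallness bound on $u$ outside a large ball, uniformly in $t\geq 0$, and then to establish that bound.

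First, since $f$ is $C^2$ in $u$ uniformly in $x$, the derivative $\partial_u f(x,\cdot)$ is continuous on $[0,1]$ uniformly in $x\in\R^N$. Combined with the hypothesis $\partial_u f(x,0)\leq -\sigma$ from (\ref{hyp:f}), one obtains $\eta\in (0,1)$ such that $\partial_u f(x,u)\leq -\sigma/2 <0$ for every $x\in\R^N$ and every $u\in[0,\eta]$. It therefore suffices to exhibit $R>0$ with $u(t,x)\leq \eta$ on $[0,\infty)\times\{|x|\geq R\}$.

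For large times this is immediate: the hypothesis $\|u(t,\cdot)-W\|_{L^\infty(\R^N)}\to 0$ provides $T>0$ such that $u(t,x)\leq W(x)+\eta/2$ for every $t\geq T$ and every $x$, and combining this with the exponential decay $W(x)\leq A e^{-\delta|x|}$ recalled just before the lemma yields $R_1>0$ with $u(t,x)\leq\eta$ for $t\geq T$ and $|x|\geq R_1$. For the compact time window $t\in[0,T]$, the natural route is parabolic comparison: up to further shrinking $\eta$, (\ref{hyp:f}) and the $C^2$ regularity give $f(x,u)\leq -(\sigma/4)u\leq 0$ for $u\in[0,\eta]$, so the constant $\eta$ is a supersolution of (\ref{eq:hetero}) in any region where $u$ crosses this level, and the maximum principle should prevent the set $\{u>\eta\}$ from invading $\{|x|\geq R\}$ for $R\geq R_1$ taken large enough.

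The main obstacle lies precisely in this last step, because $u_0\in L^\infty(\R^N)$ need not decay at infinity, so the inequality $u_0\leq \eta$ on $\{|x|\geq R\}$ is not available a priori. The hypothesis that $u(t,\cdot)\to W$ uniformly must be used to preclude the pathology $\limsup_{|x|\to\infty}u_0(x)>\eta$; I would do so by contradiction, choosing a sequence $(t_n,x_n)\in[0,T]\times\R^N$ with $|x_n|\to\infty$ and $u(t_n,x_n)>\eta$, considering the translates $u_n(t,x):=u(t,x+x_n)$, and extracting by parabolic compactness a bounded nontrivial entire limit $u_\infty$ of a limiting equation. Since $W(x+x_n)\to 0$ uniformly on compact sets, the uniform convergence of $u(t,\cdot)$ to $W$ forces $u_\infty(t,\cdot)\to 0$ as $t\to\infty$, which is incompatible with the linear instability encoded by $\partial_u f(\cdot,0)\leq -\sigma$ near $u_\infty(0,0)\geq \eta$ once $\eta$ is aligned with the stable basin of $0$, producing the desired contradiction and hence the bound on all of $[0,T]$.
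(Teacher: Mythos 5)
Your high-level plan coincides with the paper's: reduce the conclusion to an $L^\infty$-smallness bound $u(t,x)\leq\eta$ on $\{|x|\geq R\}$ for a small $\eta$ chosen via the $C^2$-regularity of $f$ and $\partial_u f(\cdot,0)\leq-\sigma$, handle $t\geq T$ using the uniform convergence to $W$ and the decay of $W$, and then treat the finite time window separately. The paper disposes of the finite-time (in fact all $t\geq 0$) case differently from what you propose: it writes the Gaussian bound $u(t,x)\leq \frac{e^{Kt}}{(4\pi t)^{N/2}}\int e^{-|x-y|^2/(4t)}u_0(y)\,dy$ with $K=\|\partial_u f\|_\infty$, invokes the assertion $\lim_{r\to\infty}\mathrm{esssup}_{|x|>r}u_0(x)=0$, and concludes $u(t,x)\to0$ as $|x|\to\infty$ locally uniformly in $t$ by dominated convergence. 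You are right to single out the decay of $u_0$ at infinity as the crux -- the paper simply asserts it, and this is where the real content of the lemma sits.

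However, your proposed repair does not close this gap. You pass to translated limits $u_n(t,x):=u(t,x+x_n)$ along a sequence with $u(t_n,x_n)>\eta$ and claim the resulting entire solution $u_\infty$, which must converge to $0$ since $W(\cdot+x_n)\to 0$, is ``incompatible with the linear instability encoded by $\partial_u f(\cdot,0)\leq-\sigma$.'' This inverts the role of the hypothesis: $\partial_u f(x,0)\leq-\sigma<0$ is precisely linear \emph{stability} of the zero state, not instability. Moreover, $\eta$ is constrained to be small -- it must satisfy $\partial_u f(x,s)\leq-\sigma/2$ for all $s\in[0,\eta]$ -- so $\eta$ lies inside the basin of attraction of $0$ for the kinetics. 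A solution $u_\infty$ with $u_\infty(t_*,0)\geq\eta$ that relaxes to $0$ as $t\to\infty$ is therefore perfectly admissible, and no contradiction appears. The desired estimate would need a quantitative ``no fast decay'' argument comparing $u_\infty(t_*,\cdot)\geq\eta$ against the a priori bound $u_\infty(T,\cdot)\leq\kappa$ inherited from $u(T,\cdot)\leq W+\kappa/2$ on $\{|x|\geq R\}$, e.g.\ via a parabolic Harnack inequality and the lower bound $\partial_tu_\infty-\Delta u_\infty\geq -Ku_\infty$; as written, that step is missing, and the soft compactness step alone does not yield it.
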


\begin{proof}
As $W\in H^1 (\R^N)$, one has $\lim_{|x|\to +\infty} W(x)=0$. Let $\kappa>0$ such that $f(x,s)\leq 0$ for all $x\in \R^N$, $s\in [0,\kappa]$, $R>0$ such that $W(x)<\kappa/2$ for all $|x|\geq R$, and $T>0$ such that $\|u(t,\cdot)-W\|_{L^\infty (\R^N)}<\kappa/2$ for all $t\geq T$.  It follows that $0\leq u(t,x)\leq \kappa$, and thus $f'\big(x,u(t,x)\big)\leq 0$, for all $t\geq T$ and $|x|\geq R$.

Next, we remark that 
$$u_t-\Delta u \leq Ku \quad \hbox{ in } (0,\infty)\times \R^N$$
with $K=\|\partial_u f\|_\infty$. 
Hence,
$$u(t,x)\leq \frac{e^{K t}}{(4\pi t)^{N/2}}\int_{\R^N} e^{-\frac{|x-y|^2}{4t}} u_0 (y) dy.$$
As $\lim_{r\to +\infty} \mathrm{esssup}_{|x|>r} u_0(x)=0$, one gets from the dominated convergence theorem that $u(t,x)\to 0$ as $|x|\to +\infty$ locally uniformly with respect to $t\geq 0$. Hence, even if it means increasing $R$, we can assume that $u(t,x)\leq \kappa$ for all $t\geq 0$ and $|x|\geq R$. 
\end{proof}

Let us extend $u$ on $\R$ by $u(t,x):=u_0 (x)$ for all $t\leq 0$.  
The following result follow easily from \cite{Huskaunbounded}. 

\begin{prop}\label{prop:defp}
Assume that $u_0$ is a measurable initial datum such that $0\leq u_0\leq 1$ in $\R^N$ and  $u(t,x)\to W(x)$ uniformly in $x$ as $t\to +\infty$. 
There exist two positive solution $p$ and $v$ of 
$$-p_t-\Delta p = f'\big(x, u(t,x)\big)p \hbox{ in } \R\times \R^N,$$
$$v_t-\Delta v = f'\big(x, u(t,x)\big)v \hbox{ in } \R\times \R^N.$$
These positive solutions are unique up to multiplication by a positive constant.

Moreover, for all $0<\delta<\sqrt{-\lambda}$, there exists $C>0$ such that
$$p(t,x)\leq C e^{-\delta x}\|p(t,\cdot)\|_{L^\infty(\R)},$$
$$v(t,x)\leq C e^{-\delta x}\|v(t,\cdot)\|_{L^\infty(\R)},$$
and
$$\|p(t,\cdot)\|_{L^{\infty}(\R^N)}\leq Ce^{-\delta^2 (t-s)}\|p(s,\cdot)\|_{L^{\infty}(\R^N)} \quad \hbox{ for all } t>s.$$
$$\|v(t,\cdot)\|_{L^{\infty}(\R^N)}\geq \frac{1}{C}  e^{\delta^2 (t-s)}\|v(s,\cdot)\|_{L^{\infty}(\R^N)} \quad \hbox{ for all } t>s.$$
In the sequel, we will normalize these functions by $\|v(0,x)\|_{L^{\infty}(\R^{N})}=1$ and $\int_{\R^{N}} p(0,x)v(0,x)dx=1$. 
\end{prop}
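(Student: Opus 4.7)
The plan is to deduce Proposition~\ref{prop:defp} from the principal Floquet bundle theory of Huska and Polacik \cite{Huskaunbounded}, applied to the linear forward and backward parabolic equations with zeroth-order coefficient $a(t,x):=f'(x,u(t,x))$ on $\R\times\R^N$. Their abstract theorem, under suitable structural assumptions on $a$, produces a positive entire solution of each equation, unique up to a positive multiplicative constant, together with a common Floquet exponent controlling growth and decay in time, and a pointwise spatial decay inherited from the coefficient.

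Before invoking \cite{Huskaunbounded} I would verify that $a$ is bounded, H\"older continuous in $x$ and continuous in $t$ on $(0,\infty)$ by parabolic regularity (the extension $u(t,\cdot):=u_0$ for $t\leq 0$ makes $a$ only $L^\infty$ there, which is still admissible), and, above all, uniformly negative outside a compact set. This last property combines three inputs: hypothesis~(\ref{hyp:f}) yields $\partial_u f(x,s)\leq -\sigma/2$ for $s\in[0,\kappa']$ with $\kappa'$ small by continuity of $\partial_u f$ in $u$; the proof of Lemma~\ref{lem:unifux} already gives $u(t,x)\leq\kappa$ for all $t\geq 0$ and $|x|\geq R$, with $\kappa$ and $R$ chosen so that $\kappa\leq\kappa'$; and for $t\leq 0$ the same pointwise estimate on $u_0$ applies directly. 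Together these produce $R,\sigma_0>0$ such that $a(t,x)\leq -\sigma_0$ for all $(t,x)\in\R\times\{|x|\geq R\}$, which is the Lyapunov-type hypothesis on the zeroth-order coefficient required in \cite{Huskaunbounded}.

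Applying the theorem then gives $v$ and $p$, unique up to positive scalars, with a common principal Floquet exponent $\mu\in\R$ satisfying $\|v(t,\cdot)\|_\infty\geq C^{-1}e^{\mu(t-s)}\|v(s,\cdot)\|_\infty$ and $\|p(t,\cdot)\|_\infty\leq Ce^{-\mu(t-s)}\|p(s,\cdot)\|_\infty$ for $t>s$, together with a pointwise spatial decay of the form $e^{-\delta|x|}$. To identify $\mu$ with $-\lambda$ and to push the admissible rate up to $\delta<\sqrt{-\lambda}$, I would exploit the asymptotic autonomy $a(t,\cdot)\to f'(\cdot,W)$ uniformly as $t\to+\infty$: a lower bound on $\mu$ follows by a comparison of $v$ from below with $e^{(-\lambda-\eta)t}\vp(x)$ on a long time interval, where $\vp$ is the principal eigenfunction from (\ref{eq:eigen}), and the matching upper bound is symmetric for $p$, for arbitrarily small $\eta>0$. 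The spatial decay at rate $\delta<\sqrt{-\lambda}$ would then be obtained by an Agmon-type barrier: on $\{|x|\geq R\}$ one compares $e^{\lambda t}v$ and $e^{-\lambda t}p$ to the radial supersolution $Ae^{-\delta|x|}$, which works because $a(t,x)+\lambda+\delta^2\leq -\sigma_0+\lambda+\delta^2\leq 0$ there as soon as $\delta^2<-\lambda+\sigma_0$.

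The spatial exponential decay of $p$ together with the $L^\infty$ bound of $v$ yields $p(0,\cdot)v(0,\cdot)\in L^1(\R^N)$, and strict positivity makes the integral strictly positive, so the normalizations $\|v(0,\cdot)\|_\infty=1$ and $\int_{\R^N}p(0,x)v(0,x)\,dx=1$ are well-posed; these are consistent across time since the formal identity $\frac{d}{dt}\int pv\,dx=0$ follows from integrating the two equations against each other. The main obstacle, in my view, is the identification $\mu=-\lambda$: one must bridge the non-autonomous Floquet theory of \cite{Huskaunbounded} with the eigenvalue theory for the autonomous limit operator via a careful asymptotic-autonomy comparison. The rest of the proof amounts to comparison-principle bookkeeping.
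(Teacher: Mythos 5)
Your proposal correctly names the source (Theorem~2.1 of \cite{Huskaunbounded}), correctly handles the bookkeeping (spatial decay by an Agmon barrier, well-posedness of the normalization via the time-invariant $\int pv$), and correctly notes that the coefficient $a(t,x)=f'(x,u(t,x))$ is uniformly negative outside a compact set, which is the analogue of the first structural hypothesis of Huska--Polacik. But there is a gap in the main step, and it is one of logical order. Theorem~2.1 of \cite{Huskaunbounded} also requires their hypothesis (H2), a quantitative instability condition: roughly, for every $\tilde\lambda>\lambda$ and every initial time $s_0$ there must exist a solution $\tilde v$ of the forward linear equation on $(s_0,\infty)$ whose $L^\infty$-norm grows at least like $e^{(\tilde\lambda-\lambda)(t-s)}$. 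The Lyapunov-type negativity outside a compact set is \emph{not} enough; in fact it is easy to verify from Lemma~\ref{lem:unifux} and (\ref{hyp:f}), and the paper takes it for granted. The whole content of the paper's proof is to verify (H2).

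Your plan instead applies the theorem first (which it cannot do, lacking (H2)), then tries to identify the resulting Floquet exponent $\mu$ with $-\lambda$ by comparing $v$ from below with $e^{(-\lambda-\eta)t}\varphi(x)$. That comparison is circular: without (H2) the theorem gives you no $v$ to compare. The fix, and what the paper actually does, is to run your asymptotic-autonomy comparison \emph{before} invoking the theorem, with a concretely constructed solution rather than the entire solution: take $\tilde v(s_0,\cdot)\equiv 1$, use the comparison principle on $(s_0,T)$ with $K=\|f'\|_\infty$ to get $\tilde v(T,\cdot)\geq e^{-K(T-s_0)}\geq e^{-K(T-s_0)}\varphi$, and then, for $t\geq T$ where $a(t,\cdot)$ is $(\tilde\lambda-\lambda)$-close to $f'(\cdot,W)$, compare $\tilde v$ from below with $\varphi(x)e^{-K(T-s_0)-\tilde\lambda(t-T)}$. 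This simultaneously verifies (H2) and, once the theorem applies, pins the exponent at (essentially) $-\lambda$; no separate a posteriori identification is needed. So you have the right analytic ingredient, but you have attached it to the wrong hypothesis of \cite{Huskaunbounded} and placed it after the theorem rather than before; as written the argument does not close.
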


\begin{proof}
This follows from Theorem 2.1 of \cite{Huskaunbounded} 
as soon as we can verify hypothesis (H2) of  \cite{Huskaunbounded}. That is, we need to prove that for all $\tilde{\lambda}>\lambda$, there exists $C>0$ such that for all $s_0\in \R$, there exists a solution $\tilde{v}$ of 
$$\partial_t \tilde{v}- \Delta \tilde{v} = f'\big( x,u(t,x)\big)\tilde{v} \quad \hbox{ in } (s_0,\infty)\times \R^N$$
such that 
$$\|\tilde{v}(t,\cdot)\|_{L^{\infty}(\R^N)}\geq e^{(\tilde{\lambda}-\lambda)(t-s)}\|\tilde{v}(s,\cdot)\|_{L^{\infty}(\R^N)} \quad \hbox{ for all } t>s\geq s_0.$$
We take $\tilde{v}(s_0,x)\equiv 1$. Let $T>0$ so that:
$$|f'\big( x,u(t,x)\big)-f'\big( x,W(x)\big)|\leq\tilde{\lambda}-\lambda \quad \hbox{ for all } x\in \R^N \hbox{ and } t\geq T.$$
If $s_0<T$, then one can easily prove using the comparison principle that $\tilde{v}(t,x)\geq e^{-K(t-s_0)}$ for all $t>s_0$ for $K=\|f'\|_\infty$. In particular, $\tilde{v}(T,x)\geq e^{-K(T-s_0)}\geq e^{-K(T-s_0)}\varphi (x)$ since $\varphi\leq 1$. Moreover, one has:
$$\partial_t \tilde{v}- \Delta \tilde{v} \geq  \Big(f'\big( x,W(x)\big)-\delta^2\Big) \tilde{v} \quad \hbox{ in } (T,\infty)\times \R^N.$$
As $\varphi (x)e^{-K(T-s_0)-\tilde{\lambda}(t-T)}$ is a subsolution of this equation, we obtain
$$\tilde{v}(t,x)\geq e^{-K(T-s_0)-\tilde{\lambda}(t-T)}\varphi (x) \quad \hbox{ for all } t>T, w\in \R^N.$$
It follows that 
$$\|\tilde{v}(t,\cdot)\|_{L^{\infty}(\R^N)}\geq e^{-K(T-s_0)-\tilde{\lambda}(t-T)} \quad \hbox{ for all } t>T.$$
This proves Hypothesis (H2) of  \cite{Huskaunbounded} and the result follows. 
\end{proof}

For any $h\in L^1 (\R^N)$, one can define
\begin{equation} \label{eq:thetadot}\left\{\begin{array}{l}
\partial_t \dot{u}-\Delta\dot{u}-f'\big(x,u(t,x)\big)\dot{u}=0 \quad \hbox{ in } (0,\infty)\times \R^N, \\
\dot{u}(0,x)=h(x).\\
\end{array}\right. \end{equation}

The next result shows exponential separation between the solutions of (\ref{eq:thetadot}) that are orthogonal to $p$ and the principal Floquet bundle generated by $v$. 

\begin{thm}\label{thm:Huskaunbounded} [Theorem 2.2 of \cite{Huskaunbounded}.]
Assume that $u_0$ is a measurable initial datum such that $0\leq u_0\leq 1$ in $\R^N$ and  $u(t,x)\to W(x)$ uniformly in $x$ as $t\to +\infty$. 
There exists $\gamma>0$ and $C>0$ such that for all $h\in L^\infty (\R^N)$, if $\int_{\R^N} h(x)p(0,x)dx=0$, then for all $t>s>0$:
$$\frac{\|\dot{u}(t,\cdot)\|_{L^\infty(\R^N)}}{\|v(t,\cdot)\|_{L^\infty(\R^N)}}\leq C e^{-\gamma (t-s)} \frac{\|\dot{u}(s,\cdot)\|_{L^\infty(\R^N)}}{\|v(s,\cdot)\|_{L^\infty(\R^N)}}.$$
\end{thm}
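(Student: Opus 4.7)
The plan is to reduce the statement to a direct application of Theorem~2.2 of \cite{Huskaunbounded}, whose setting covers exactly the nonautonomous linear parabolic equation (\ref{eq:thetadot}) satisfied by $\dot{u}$. The abstract framework of \cite{Huskaunbounded} asserts that, once the principal Floquet bundles $p$ and $v$ are constructed and endowed with the exponential growth/decay gap of Proposition \ref{prop:defp}, any solution of the linear equation orthogonal to the direction generated by $p$ must decay exponentially faster than the direction generated by $v$. The bulk of the infrastructure is already in place: Proposition \ref{prop:defp} verified Hypothesis (H2) of \cite{Huskaunbounded} and furnished the uniform exponential bounds on $p$, $v$ together with the pointwise spatial decay estimates.

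The one intermediate step I still have to carry out is to show that the orthogonality of the initial data, $\int_{\R^N}h(x)p(0,x)\,dx=0$, propagates for all positive times into $\int_{\R^N}\dot{u}(t,x)p(t,x)\,dx=0$. This is a standard duality identity: formally, multiplying the equation for $\dot{u}$ by $p$, the equation for $p$ by $\dot{u}$, subtracting and integrating in $x$ yields
\begin{equation*}
\frac{d}{dt}\int_{\R^N}p(t,x)\dot{u}(t,x)\,dx=\int_{\R^N}\bigl(p\Delta\dot{u}-\dot{u}\Delta p\bigr)dx=0,
\end{equation*}
the boundary terms at infinity being controlled by the exponential spatial decay $p(t,x)\leq Ce^{-\delta|x|}\|p(t,\cdot)\|_\infty$ from Proposition \ref{prop:defp} together with the Gaussian heat-kernel bound on $\dot{u}$ arising from $h\in L^\infty(\R^N)$. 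To justify the integration by parts rigorously one introduces cutoffs $\chi_R$ and lets $R\to\infty$, using the $H^1$-type energy control on $\dot{u}$ inherited from the parabolic smoothing effect (available for any $t>0$, even though $h$ is only $L^\infty$).

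Once the conservation law is established, the conclusion is immediate: for any fixed $s>0$, the function $\dot{u}(s,\cdot)$ is smooth (by parabolic regularization), bounded, and satisfies $\int_{\R^N}\dot{u}(s,x)p(s,x)\,dx=0$. Applying Theorem~2.2 of \cite{Huskaunbounded} on the time interval $[s,\infty)$ then yields the exponential separation
\begin{equation*}
\frac{\|\dot{u}(t,\cdot)\|_\infty}{\|v(t,\cdot)\|_\infty}\leq Ce^{-\gamma(t-s)}\frac{\|\dot{u}(s,\cdot)\|_\infty}{\|v(s,\cdot)\|_\infty},
\end{equation*}
with constants $C,\gamma>0$ that depend only on the structural data (bounds on $f$, the spectral gap $-\lambda$, the decay rate $\delta$) and not on $s$ or $h$.

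The main potential obstacle is the time-dependence of the potential $f'(x,u(t,x))$ combined with the fact that the domain is all of $\R^N$: standard spectral or compactness arguments do not apply. This is precisely the difficulty that \cite{Huskaunbounded} resolves, under decay hypotheses on the potential at spatial infinity. In our situation those hypotheses are met thanks to Lemma \ref{lem:unifux} (which provides $f'(x,u(t,x))\leq 0$ for $|x|\geq R$ uniformly in $t$) and the $H^1$-decay of $W$; hence the abstract machinery of \cite{Huskaunbounded} applies off the shelf and no new analysis is required beyond the duality identity above.
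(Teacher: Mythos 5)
Your proposal is correct and follows essentially the same route as the paper: the statement is simply Theorem~2.2 of \cite{Huskaunbounded}, whose applicability was already secured by the verification of hypothesis (H2) in Proposition \ref{prop:defp}, so the result is quoted rather than reproved. Your additional duality identity showing that $\int_{\R^N}\dot{u}(t,x)p(t,x)\,dx$ is conserved is the same routine integration by parts the paper invokes later (in the proof of Theorem \ref{thm:h}) and is justified exactly as you describe.
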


The next lemma states that the principal Floquet bundle converges, in a sense, to the classical notion of principal eigenvalue when the linearized equation becomes independent of $t$. 
\begin{lem}\label{lem:cvphi}
One has 
$$p(t,\cdot)/\|p(t,\cdot)\|_{L^\infty(\R^N)}\to \varphi \quad \hbox{ and }\quad v(t,\cdot)/\|v(t,\cdot)\|_{L^\infty(\R^N)}\to \varphi  \quad \hbox{ as } t\to +\infty$$
uniformly in $x$, where $\varphi$ is the principal eigenfunction defined by (\ref{eq:eigen}) and normalized by $\|\varphi\|_{L^\infty (\R^N)}=1$.
\end{lem}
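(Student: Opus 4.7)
The plan is to argue by compactness, extracting subsequential limits of the normalized profiles along any sequence $t_n \to +\infty$ and identifying them with $\varphi$. I focus on $v$; the case of $p$ is the mirror image, obtained by the time change $q(\tau,x) := p(t_n - \tau, x)$, which turns the backward parabolic equation into a forward one whose coefficient $f'(x, u(t_n-\tau, x))$ still tends to $f'(x, W(x))$ uniformly in $x$ as $t_n \to +\infty$ for $\tau$ bounded.

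Fix $t_n \to +\infty$ and consider the renormalized shifts
\[ v_n(s,x) := \frac{v(t_n + s, x)}{\|v(t_n, \cdot)\|_{L^\infty(\R^N)}}. \]
By Proposition \ref{prop:defp}, the family $(v_n)$ is positive, uniformly bounded in $L^\infty$ on compact intervals of $s$ (apply the exponential growth/decay bounds of that proposition in both time directions to get $\|v(t_n+s,\cdot)\|_\infty \leq C_s \|v(t_n,\cdot)\|_\infty$), and satisfies the uniform spatial tail $v_n(s,x) \leq C e^{-\delta|x|}$ on each such strip. Each $v_n$ solves
\[ \partial_s v_n - \Delta v_n = f'\bigl(x, u(t_n+s,x)\bigr) v_n, \]
with the coefficient converging uniformly in $x$ to $f'(x, W(x))$ on any bounded range of $s$. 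Interior parabolic Schauder estimates then yield, along a subsequence, convergence $v_n \to V$ in $C^{1,2}_{\mathrm{loc}}(\R \times \R^N)$, where $V$ is a positive (by the strong maximum principle) bounded entire solution of the autonomous limit equation
\[ \partial_s V - \Delta V = f'(x, W(x))\, V \quad \text{on } \R \times \R^N, \]
with $V(s,x) \leq C e^{-\delta|x|}$ and $\|V(0,\cdot)\|_{L^\infty} = 1$.

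The crux of the proof is to show that any such positive bounded entire solution $V$ with exponential spatial decay must be of the form $V(s,x) = c\, e^{-\lambda s}\varphi(x)$ for some $c > 0$. This is the autonomous analogue of the uniqueness assertions in Proposition \ref{prop:defp} and Theorem \ref{thm:Huskaunbounded}. The transparent argument proceeds via the spectral decomposition of the self-adjoint operator $\mathcal{L} := -\Delta - \partial_u f(\cdot, W)$ on $L^2(\R^N)$, whose essential spectrum lies in $[\sigma, +\infty)$ by hypothesis (\ref{hyp:f}) and whose principal eigenvalue $\lambda$ is simple with eigenfunction $\varphi$. Since $V(s,\cdot) \in L^2(\R^N)$ by the exponential tail, one may write $V(s,\cdot) = \sum_k c_k\, e^{-\lambda_k s}\psi_k$; boundedness of $V$ as $s \to -\infty$, together with positivity and the fact that every non-principal eigenfunction is sign-changing, forces all non-principal coefficients to vanish. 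Equivalently, applying Theorem \ref{thm:Huskaunbounded} directly to the autonomous coefficient $f'(\cdot,W)$ identifies its principal Floquet bundle with $\{c\, e^{-\lambda s}\varphi\}_{c>0}$, and any other positive entire solution would contradict the exponential separation as $s \to -\infty$.

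The normalization $\|V(0,\cdot)\|_{L^\infty} = 1 = \|\varphi\|_{L^\infty}$ then pins $c = 1$, so $V(0,\cdot) = \varphi$. Since every subsequence admits a further subsequence with the same limit, the full limit $v(t,\cdot)/\|v(t,\cdot)\|_\infty \to \varphi$ holds locally uniformly on $\R^N$, and the same conclusion for $p$ follows from the time-reversed argument above. Uniform convergence on $\R^N$ is then a direct consequence of the tail estimates in Proposition \ref{prop:defp}: both normalized profiles and $\varphi$ are uniformly $O(e^{-\delta|x|})$, so local uniform convergence automatically upgrades to uniform convergence. I expect the main obstacle to be the identification of ancient positive solutions of the autonomous equation with multiples of $e^{-\lambda s}\varphi$; while morally transparent from the spectral picture, it requires care on the unbounded domain, which is precisely the setting in which the Floquet machinery of \cite{Huskaunbounded} applies cleanly.
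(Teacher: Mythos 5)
Your compactness argument is a genuinely different route from the paper's, which consists of a single sentence: ``These are immediate consequences of the uniqueness result stated in Proposition 2.5 of \cite{Huskaunbounded}.'' In effect you have reconstructed what that citation leaves implicit: extract subsequential limits of the normalized profiles using the a priori bounds of Proposition~\ref{prop:defp}, pass to the limit in the equation where the coefficient has become autonomous, identify the limit as a multiple of $e^{-\lambda s}\varphi$, and upgrade local uniform convergence to global via the exponential tails. This is a valid and more self-contained way of presenting the proof, and the time-reversal trick for $p$ is correct (the backward equation for $p$ becomes forward for $q(\tau,x)=p(t_n-\tau,x)$ with the same limiting coefficient).

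One step is not as clean as you claim, though. In the spectral identification you argue that ``boundedness of $V$ as $s\to-\infty$, together with positivity and the fact that every non-principal eigenfunction is sign-changing, forces all non-principal coefficients to vanish.'' The boundedness-as-$s\to-\infty$ part only eliminates modes with $\lambda_k>0$; modes with $\lambda_k\in(\lambda,0]$ also decay as $s\to-\infty$ (more slowly than the principal mode), so boundedness alone says nothing about them. Positivity is the ingredient that actually does the work, but making ``a positive ancient solution has no non-principal component'' rigorous on $\R^N$, where the operator $-\Delta-\partial_uf(\cdot,W)$ has continuous spectrum $[\sigma,\infty)$ in addition to discrete eigenvalues, is precisely the content of the Huska--Polacik uniqueness machinery and not a soft consequence of a naive eigenfunction expansion (which is not even available in the form $\sum_k c_k e^{-\lambda_k s}\psi_k$). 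Your fallback---applying the Floquet-bundle uniqueness of \cite{Huskaunbounded} to the autonomous coefficient $f'(\cdot,W)$, for which hypothesis (H2) is easily verified and the unique positive entire solution is $ce^{-\lambda s}\varphi$---is the argument to keep; the heuristic spectral-sum paragraph should be dropped or clearly flagged as motivation. A second, smaller point: with only the regularity assumed on $f$ (no H\"older continuity in $x$), interior Schauder estimates do not directly apply; one should instead invoke Krylov--Safonov/De Giorgi--Nash interior H\"older estimates to obtain the compactness, then pass to the limit in the weak formulation. Neither issue is fatal, but as written the identification step is not yet a proof.
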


\begin{proof}
These are immediate consequences of the uniqueness result stated in Proposition 2.5 of \cite{Huskaunbounded}. 
\end{proof}

%

Lastly, the notion of principal Floquet bundle on $\R^N$ could be approximated by the one on $B_R$ when $R\to +\infty$, in the following sense.

\begin{prop}\label{propHuskaRgrand}
For all $R>0$, there exists a unique (up to multiplication by a positive constant) positive solution $v_R$ of
$$\left\{ \begin{array}{lll} 
\partial_t v_{R}-\Delta v_{R} = f'\big( x,u(t,x)\big)v_{R} &\hbox{ in }& \R\times B_R,\\
v_{R}=0&\hbox{ on  }& \R\times \partial B_R.\\
\end{array}\right. $$
Moreover, if we normalize it by $\| v_R\|_{L^{\infty}(B_{R})}=1$, then one has 
$$\lim_{R\to +\infty}v_R(t,x)=v(t,x) \quad \hbox{ loc. unif. in } (t,x)\in \R\times \R^{N}.$$
Lastly, for all $\tilde{\lambda}\in (\lambda,0)$, there exists $R_0$ such that for all $R\geq R_0$:
$$\|v_{R}(t,\cdot)\|_{L^{\infty}(B_{R})}\geq e^{-\tilde{\lambda}(t-s)}\|v_{R}(s,\cdot)\|_{L^{\infty}(B_{R})} \quad \hbox{ for all } t>s.$$
\end{prop}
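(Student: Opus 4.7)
The plan is to treat the three claims in sequence, leveraging the theory of \cite{Huskaunbounded} restricted to the bounded-domain setting for the existence of $v_R$, then passing to the limit by parabolic compactness and invoking the uniqueness statement (Proposition 2.5 of \cite{Huskaunbounded}, cited here in Proposition \ref{prop:defp}) to identify the limit with $v$.

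First, for the existence and uniqueness of $v_R$: on the bounded smooth domain $B_R$, the coefficient $f'(x,u(t,x))$ is bounded in $\R\times B_R$ and the associated evolution operator on $L^\infty(B_R)$ with zero Dirichlet data is strongly positive and compact. The Huska--Polacik theory of principal Floquet bundles (whose hypotheses are trivially satisfied on a bounded domain, with (H2) provided by taking $\tilde v_R(s_0,\cdot)\equiv \phi_R$, the principal Dirichlet eigenfunction of $-\Delta - f'(x,W(x))$ on $B_R$, and arguing as in Proposition \ref{prop:defp}) applies and yields the desired $v_R$, unique up to a positive multiplicative constant.

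Second, for the convergence $v_R\to v$, I normalize $\|v_R(0,\cdot)\|_{L^\infty(B_R)}=1$ and extend $v_R$ by zero outside $B_R$. Standard parabolic interior estimates give uniform local $C^{1,\alpha}$ bounds on $v_R$ on every compact subset of $\R\times\R^N$ (once $R$ is large), so by Arzelà--Ascoli and a diagonal extraction some subsequence converges locally uniformly to a nonnegative entire solution $\tilde v$ of $\tilde v_t-\Delta\tilde v=f'(x,u(t,x))\tilde v$ on $\R\times\R^N$. The key difficulty is to show $\tilde v\not\equiv 0$, i.e.\ that the maxima $x_R\in\overline{B_R}$ of $v_R(0,\cdot)$ do not escape to infinity. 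For this I use the comparison implicit in Huska--Polacik: for any $\tilde\lambda\in(\lambda,0)$, the Dirichlet Floquet exponent on $B_R$ for the linearization around $W$ converges monotonically to $\lambda$ as $R\to\infty$ (classical monotonicity of principal eigenvalues on expanding domains), and the corresponding principal bundle on $B_R$ inherits the exponential decay estimate $v_R(t,x)\leq C e^{-\delta|x|}\|v_R(t,\cdot)\|_{L^\infty(B_R)}$ uniformly in $R$, for any $\delta<\sqrt{-\tilde\lambda}$; this is the analogue at finite $R$ of the estimate quoted in Proposition \ref{prop:defp} and is established by the same barrier argument using $\varphi_R$ on $B_R\setminus B_{R_0}$. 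Hence $|x_R|$ stays bounded, $\tilde v(0,\cdot)$ has $L^\infty$ norm at least $1$ at some finite point, and the strong parabolic maximum principle gives $\tilde v>0$. The uniqueness statement in Proposition \ref{prop:defp} then forces $\tilde v=v$ up to scaling; the normalization $\|v_R(0,\cdot)\|_{L^\infty}=1$ together with the same normalization for $v$ at $t=0$ pins down the constant, and since the limit is unique the whole family converges.

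Third, for the lower bound, Huska--Polacik's Theorem~2.1 applied on $B_R$ furnishes a Floquet exponent $\lambda_R$ characterizing the growth of $\|v_R(t,\cdot)\|_{L^\infty(B_R)}$, and a Rayleigh-type argument (or the stability of principal eigenvalues under the passage from $f'(x,u(t,x))$, with $u(t,\cdot)\to W$, to $f'(x,W)$, combined with the monotonicity of Dirichlet principal eigenvalues in $R$) shows $\lambda_R\to\lambda$ as $R\to\infty$. Given $\tilde\lambda\in(\lambda,0)$, pick $R_0$ so that $\lambda_R<\tilde\lambda$ for $R\geq R_0$; the stated estimate $\|v_R(t,\cdot)\|_{L^\infty(B_R)}\geq e^{-\tilde\lambda(t-s)}\|v_R(s,\cdot)\|_{L^\infty(B_R)}$ is then exactly the Huska--Polacik growth bound on $B_R$.

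The main obstacle is the nontriviality of the limit in step two, i.e.\ ruling out mass escape to infinity; everything else is a matter of assembling the Floquet-bundle machinery of \cite{Huskaunbounded} in a shrinking and an expanding domain and invoking uniqueness. The uniform exponential decay estimate for $v_R$, which I would establish via an $R$-independent supersolution built from $\varphi$ in the annular region $B_R\setminus B_{R_0}$, is the technical heart of that step.
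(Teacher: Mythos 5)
Your proposal is essentially the paper's approach made explicit: the paper's own proof is a one-line citation to "the proof of Theorems 2.1 and 2.4 in \cite{Huskaunbounded}," and your sketch reconstructs exactly the ingredients one would extract from there — existence/uniqueness of the bounded-domain Floquet bundle, parabolic compactness plus a uniform exponential decay bound to preclude mass escape, identification of the limit via the uniqueness in Proposition~\ref{prop:defp}, and convergence of the Floquet exponents $\lambda_R\to\lambda$ for the growth estimate. Since the paper defers entirely to Huska--Polacik, your reconstruction faithfully fills in the details the paper leaves implicit, along the expected lines.
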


\begin{proof} 
This is proved along the proof of Theorems 2.1 and 2.4 in \cite{Huskaunbounded}.
\end{proof}

We conclude this section with the proof of Lemma \ref{lem:lambda<0}.

\begin{proof}[Proof of Lemma \ref{lem:lambda<0}.]
We notice that $\partial_{x_i}W$ is an $H^1(\R^N)$ eigenfunction associated with the eigenvalue $\tilde{\lambda}=0$. As the principal eigenvalue $\lambda$ is the smallest eigenvalue, one has $\lambda\leq \tilde{\lambda}=0$. Moreover, if $\lambda= \tilde{\lambda}=0$, then $\partial_{x_i}W$ is proportional to the principal eigenfunction, which as a sign. This would mean that $W$ is strictly monotone with respect to $x_i$ for all $i$. This would contradict $W\in H^1 (\R^N)$. Hence, $\lambda<0$. 
\end{proof}

\subsection{The case of a perturbation which is positive on a large ball}

We begin with the case where the perturbation $h$ is positive on a ball $B_R$, with $R$ large.

\begin{prop}\label{prop:R}
There exist $\e_\infty$ and $R_0>0$ such that for all  $R\geq R_0+1$, for any admissible perturbation $h\in L^\infty (\R)$ such that $\essinf_{B_R} h>0$ and $\|h\|_{L^\infty(\R^N)}<\e_\infty$, one has:
$$\inf_{(t,x)\in (0,\infty)\times B_{R-1}}(u^h-u)(t,x)>0.$$

\end{prop}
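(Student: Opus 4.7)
Set $w := u^h - u$. By the fundamental theorem of calculus,
\[
w_t - \Delta w \;=\; g(t,x)\, w, \qquad w(0,\cdot) = h,
\]
where $g(t,x) := \int_0^1 \partial_u f\bigl(x, u(t,x) + s w(t,x)\bigr)\, ds$ is uniformly bounded by $K := \|\partial_u f\|_{L^\infty}$. Since $f$ is $\mathcal{C}^2$, one has the pointwise estimate $|g(t,x) - f'(x,u(t,x))| \leq C|w(t,x)|$ for some $C>0$, together with the crude maximum-principle bound $|w(t,x)| \leq \|h\|_{L^\infty(\R^N)} e^{Kt}$.

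The plan is to compare $w$ from below, on the parabolic cylinder $(0,\infty) \times B_R$, with a positive multiple $\alpha v_R$ of the Dirichlet principal Floquet bundle $v_R$ from Proposition \ref{propHuskaRgrand}. The constant $\alpha := \essinf_{B_R} h / \|v_R(0,\cdot)\|_{L^\infty(B_R)} > 0$ is chosen so that $\alpha v_R(0,\cdot) \leq h$ on $B_R$. Once a bound of the form $w \geq (\alpha/4) v_R$ on this cylinder is established, the conclusion follows easily: by the parabolic Harnack inequality, $v_R(t,x) \geq c_R \|v_R(t,\cdot)\|_{L^\infty(B_R)}$ uniformly on $\overline{B_{R-1}}$ for some $c_R > 0$, and by Proposition \ref{propHuskaRgrand} applied with $s=0$, the norm $\|v_R(t,\cdot)\|_{L^\infty(B_R)}$ is nondecreasing in $t$ (in fact grows exponentially for $R \geq R_0$), so $\inf_{(0,\infty) \times B_{R-1}} v_R > 0$.

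The obstacle is that $\alpha v_R$ vanishes on $\partial B_R$, while $w$ may be negative there if $h$ is negative outside $B_R$. I handle this by splitting $w|_{(0,\infty) \times B_R} = w_1 + w_2$, where $w_1$ solves the linear equation with potential $g$ on $B_R$ with initial datum $h|_{B_R}$ and zero Dirichlet boundary condition, and $w_2$ solves the same equation with zero initial datum and lateral boundary equal to $w|_{\partial B_R}$. For $w_1$, the comparison with $\alpha v_R$ can be closed: although the reference equation for $v_R$ uses the potential $f'(x,u)$ rather than $g$, the discrepancy is of order $|w|$ and hence of order $\|h\|_{L^\infty}$ at least on bounded time intervals. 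A Gronwall-type perturbation argument, combined with the exponential growth of $v_R$ at rate $|\tilde{\lambda}|>0$ (Proposition \ref{propHuskaRgrand}), yields $w_1 \geq (\alpha/2) v_R$ on $(0,\infty) \times B_R$ provided $\varepsilon_\infty$ is small enough. For $w_2$, the maximum principle gives $|w_2(t,x)| \leq \|w|_{\partial B_R}\|_{L^\infty} \cdot \rho_R(|x|)$ with $\rho_R(|x|)$ decaying rapidly as $R - |x|$ grows, by classical heat-kernel-on-a-ball estimates. For $R \geq R_0+1$ sufficiently large, this leakage is dominated by $(\alpha/4) v_R$ on $\overline{B_{R-1}}$, yielding $w \geq (\alpha/4) v_R$ on $(0,\infty) \times B_{R-1}$.

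\textbf{Main obstacle.} The delicate technical point is the quantitative balance between the exponential growth of $v_R$ (at rate $|\tilde{\lambda}|$) and the long-time growth of the lateral boundary leakage $w_2$. The crude bound $|w| \leq \|h\|_{L^\infty} e^{Kt}$ is only effective on bounded time intervals, so the long-time regime requires either iterating the argument in successive time slabs (using the trivial bound $|w| \leq 1$ and the fact that $u(t,\cdot) \to W$ uniformly, which makes $g$ asymptotically close to $f'(x,W)$ on regions where $w$ remains small, enabling a bootstrap) or a more direct spectral argument on the limit linearized equation. The large-ball hypothesis $R \geq R_0+1$ and the smallness $\|h\|_{L^\infty} < \varepsilon_\infty$ are both indispensable to ensure that the exponential gain from $v_R$ genuinely dominates these competing contributions.
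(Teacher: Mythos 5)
Your proposal shares with the paper the use of the Dirichlet principal Floquet bundle $v_R$ from Proposition~\ref{propHuskaRgrand}, but the architecture is genuinely different, and in the form you sketch it there are real gaps.

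The paper does not compare $w = u^h - u$ with a growing linear object at all. It \emph{renormalizes} the Floquet bundle: it sets $w_r := v_r\, e^{-\beta(t) - A(t)}$ with $\beta(t) = \ln\|v_r(t,\cdot)\|_{L^\infty(B_r)}$ and $A$ chosen (via the $\lfloor\beta'\rfloor$ representation from \cite{NR}) so that $\beta' + A' \geq -\tilde\lambda$ a.e.\ and $A$ is bounded. The upshot is a function $w_r$ that is \emph{uniformly bounded in time}, bounded away from zero in any $B_{r'}$, $r' < r$, and satisfies the linearized inequality with a strict margin $\tilde\lambda w_r$. A uniform-in-time Hopf lemma on $\partial B_r$ then lets the paper glue $w_r$ to a small \emph{negative} radial tail $w(x) = B(e^{-\alpha(|x|-r)} - 1)$ outside $B_r$, giving a globally defined $\underline w$ that is a weak subsolution of the linearized operator with margin $-\sigma|\underline w|$. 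Because $\underline w$ is bounded, the second-order Taylor remainder of $f$ is $O(\kappa^2)$, so for $\kappa$ small $\underline u := u + \kappa\underline w$ is a strict subsolution of the \emph{nonlinear} equation on all of $(0,\infty) \times \R^N$. The constants $\kappa$ and $\e_\infty$ then only enter once, at $t=0$, to ensure $\kappa\underline w(0,\cdot) \leq h$ (which is where $\essinf_{B_R} h > 0$ and $\|h\|_\infty < \e_\infty$ are used, the negative tail of $\underline w$ absorbing the negative part of $h$ outside $B_R$). One application of the parabolic comparison principle then finishes. There is no iteration in time, and no boundary leakage to control: the comparison is global in space.

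Your proposal instead works on the cylinder $(0,\infty) \times B_R$ and splits the linearized solution into $w_1$ (Dirichlet BC) and $w_2$ (boundary leakage), attempting to bound $w_1$ from below by a multiple of $v_R$ via a Gronwall-type perturbation and to control $w_2$ by heat-kernel decay. You are right to flag the long-time regime as the main obstacle, but it is more than an obstacle --- it is where the argument as written breaks. First, the bound $|g(t,x) - f'(x,u(t,x))| \leq C|w(t,x)|$ is only $O(\|h\|_\infty)$ for bounded $t$; as $t \to \infty$, $w$ becomes $O(1)$ (indeed under the conclusion $u^h \to 1$ while $u \to W$, so $w \to 1 - W$), so the potential perturbation is eventually not small, and Gronwall on $(0,\infty)$ gives nothing uniform. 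Second, the lateral data $\|w|_{\partial B_R}\|_\infty$ likewise becomes $O(1)$, not $O(\|h\|_\infty)$; your claim that $w_2$ stays below $(\alpha/4)v_R$ on $B_{R-1}$ for all $t$ is not justified, since at intermediate times $v_R$ is only of order $\alpha \sim \|h\|_\infty$ while $w_2$ may already be of order one. Third, the proposed rescue --- iterating in time slabs using ``$g$ asymptotically close to $f'(x,W)$ on regions where $w$ remains small'' --- is circular, since the goal is to show $w$ does not remain small. The paper's trick of renormalizing $v_r$ into a \emph{bounded} subsolution profile, and then gluing a negative tail so that the comparison can be run globally in space via the nonlinear maximum principle, is precisely what sidesteps all three of these issues; if you want to pursue your cylinder decomposition you would need a substitute for the renormalization step, which your sketch does not supply.
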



\begin{proof} Our aim is to construct a subsolution $\underline{u}$ of the equation satisfied by $u^h$, using the linearized equation near $u$.

Take $\tilde{\lambda}\in (\lambda,0)$, $R_0$ as in Proposition \ref{propHuskaRgrand}, $R> R_0+1$, $r\in (R-1,R)$ and $h$ satisfying the hypotheses of the Proposition. 

Define 
$$\beta(t):= \ln(\|v_{r} (t,\cdot)\|_{L^{\infty}(B_r)}).$$
Hence, if we define 
$$\lm{\beta'}:=	\sup_{t>0}\bigg(\inf_{s>0}\frac1t\int_s^{s+t}\beta'(\tau)d\tau\bigg),$$
one has $\lm{\beta'}> -\tilde{\lambda}$ by Proposition \ref{propHuskaRgrand}. 
Lemma 3.2 of \cite{NR} yields that 
$$\lm{\beta'}	=\sup_{A\in W^{1,\infty}(0,\infty)}\left(\essinf_{(0,\infty)}(\beta'+A')\right).$$
Take $A\in W^{1,\infty}(0,\infty)$
	such that $\beta'+A'\geq -\tilde{\lambda}$ a.e.~in $(0,\infty)$. 
Let 
$$w_r(t,x):=v_r(t,x)e^{-\beta(t)-A(t)}.$$
Even if it means adding a constant to $A$, we could assume that $w_r(0,\cdot)\equiv v_r(0,\cdot)$. 
	This function satisfies 
$$\partial_{t}w_{r}-\Delta w_{r} - \partial_u f\big( x,u(t,x)\big)w_{r}\leq \tilde{\lambda}w_{r} \hbox{ in } (0,\infty)\times B_r,$$
with $\sup_{(0,\infty)\times B_{r} }w_{r}\leq e^{\|A\|_\infty}$ and $\inf_{(0,\infty)\times B_{r'} }w_{r}>0$ for any $r'\in (0,r)$ thanks to the Harnack inequality. We could thus take $c$ small enough so that $v(0,x)\geq c w_{r}(0,x)$ for all $x\in B_{r}$, where $v$ is defined by Proposition \ref{prop:defp}.

Next, let us prove the following claim: there exists $\delta >0$ such that one has $\partial_n v_r (t,x)\leq -\delta \|v_r (t,\cdot)\|_{L^\infty(B_r)}$ for all $x\in \partial B_r$ and $t>0$. If this was not true, there would exist a sequence $\big((t_k,x_k)\big)_k$ in $(0,\infty)\times \partial B_r$ such that $\partial_n v_r (t_k,x_k)\geq -\frac{1}{k} \|v_r (t_k,\cdot)\|_{L^\infty(B_r)}$. If $(t_k)_k$ is bounded, we can assume up to extraction that $\big((t_k,x_k)\big)_k$ converges to a limit $(t_*,x_*)$ in $[0,\infty)\times \partial B_r$ such that 
$\partial_n v_r (t_*,x_*)\geq 0$, which contradicts the Hopf Lemma. If $t_k\to +\infty$ as $k\to +\infty$ along a subsequence, then as the sequence $\big( v_r(t_k,\cdot)/\|v_r(t_k,\cdot)\|_{L^\infty (B_r)}\big)_k$ is bounded in $L^\infty (B_r)$, using parabolic regularity, it converges to a function $v_*(0,\cdot)$ in $\mathcal{C}^2 (B_r)$ as $k\to +\infty$, that we could extend to a nonnegative solution $v_*$ of 
$$\partial_t v_* - \Delta v_* - \partial_u f\big( x,W(x)\big)v_*=0 \hbox{ in } \R\times B_r, \quad v_*=0 \hbox{ over } \R\times \partial B_r.$$
Moreover, one has $\partial_n v_* (0,x_*)\geq 0$. The Hopf Lemma yields that $v_* (0,\cdot)\equiv 0$ in $B_r$, a contradiction since $\|v_*\|_{L^\infty (B_r)}=1$. This proves the claim, from which it follows from the definition of $w_r$ that 
$\partial_n w_r \leq -\delta e^{-\|A\|_\infty}$ for all $x\in \partial B_r$ and $t>0$.

Even if it means increasing $r$, we can assume that $\partial_u f\big(x, u(t,x)\big)\leq \partial_u f(x,0)/2$ for all $t\geq 0$ and $|x|\geq r$ by Lemma \ref{lem:unifux} since $\partial_u f(x,0)<0$. 
Let $\alpha>0$ and define $w(x):= B\big(e^{-\alpha (|x|-r)}-1\big)$, for some small $B>0$ that  will be prescribed later. For $\alpha>0$ small enough (independent of $B$), one has 
$$-\Delta w\leq 0 \leq \Big(\partial_u f\big( x,u(t,x)\big)- \partial_u f(x,0)/2\Big) w \quad \hbox{ in } \R^N\backslash B_r.$$
since $w(x)<0$ in $ \R^N\backslash B_r$.

We now define $\underline{w}(x):= w_r(x)$ if $x\in B_r$, $w(x)$ if $|x|\geq r$. 
First, one has $\partial_n w(x)=-\alpha B$ for all $x\in \partial B_R$, where $n$ is the normal vector leaving $B_r$. We now take 
$B\leq \delta  e^{-\|A\|_\infty}/\alpha$. This yields
 $\partial_n w=-\alpha B\geq \partial_n w_r$ over $\partial B_r$ if $B$ is large enough. 
 Second,  as $w(x)<0$ and  $\partial_u f\big(x, u(t,x)\big)\leq \partial_u f(x,0)/2$  if $|x|\geq r$ for all $t>0$ and $\tilde{\lambda}<0$, 
one has 
\begin{equation} \label{eq:ubar2}\partial_{t}\underline{w}-\Delta \underline{w}- \partial_u f\big( x,u(t,x)\big)\underline{w}\leq -\sigma |\underline{w}| \hbox{ in } (0,\infty)\times \R^N,\end{equation}
in the weak sense since there is a jump of the derivatives over $\partial B_r$, where we choose $\sigma>0$ such that $\sigma<\min \{ -\sup_{x\in \R^N} \partial_u f(x,0)/2, -\tilde{\lambda}\}$. 

Let $\kappa_0>0$ such that for all $\kappa \in \R$ such that $|\kappa|\leq \kappa_{0}$, 
$$\Big|f(x,u(t,x)+\kappa)-f(x,u(t,x))-\partial_u f(x,u(t,x))\kappa \Big| < \sigma |\kappa |.$$
As $\sup_{(0,\infty)\times B_{r} }w_{r}<\infty$ and $\sup_{(0,\infty)\times (\R^N\backslash B_{r}) }|w|\leq B$, we can assume that $\kappa$ is small enough so that 
\begin{equation} \label{eq:ubar1}\Big|f\big(x,u(t,x)+\kappa \underline{w}(t,x)\big)-f\big(x,u(t,x)\big)-\partial_u f\big(x,u(t,x)\big)\kappa \underline{w}(t,x)\Big| < \sigma |\kappa \underline{w}(t,x)|\end{equation}
 for all $(t,x)\in (0,\infty)\times B_{r}$.

Define $\underline{u}(t,x):= u(t,x)+\kappa\underline{w}(t,x) $. Gathering (\ref{eq:ubar2}) and (\ref{eq:ubar1}), one gets
$$\partial_{t}\underline{u}-\Delta \underline{u}<f(x,\underline{u}) \quad \hbox{ on } (0,\infty)\times \R^N.$$

As  $\essinf_{B_R} h>0$ by hypothesis, we can take $\kappa$ small enough such that $h\geq \kappa w$ on $B_R$. Let $\e_\infty := \kappa B (1-e^{-\alpha (R-r)})$. Then for all $h$ such that $\|h\|_{L^\infty(\R^N)}<\e_\infty$, noticing that $w(x)\leq -B (1-e^{-\alpha (R-r)})$ for all $x$ such that $|x|\geq R$, one has $h\geq \kappa w$ 
over $\R^N\backslash B_R$. Hence $h\geq \kappa w$ on $\R^N$. 


As
$$\underline{u}(0,x)= u(0,x)+\kappa\underline{w}(0,x) \leq u(0,x)+ h(x)=u^h(0,x) \hbox{ for all } x\in \R^N,$$
the parabolic comparison principle yields
$$\underline{u}\leq u^h \quad \hbox{ on } (0,\infty)\times \R^N.$$
This implies:
 $$\inf_{B_{R-1}}\big(u^h(t,\cdot)-u(t,\cdot)\big)\geq \kappa\inf_{B_{R-1}}w_{r}>0  \quad \hbox{ on } (0,\infty),$$
 which concludes the proof. 

\end{proof}


\subsection{The general case}

We are now in position to prove our main result.

\begin{proof}[Proof of Theorem \ref{thm:h}.] 

Let $\tilde{R}_0$ as in Proposition \ref{prop:R} and take $R\geq \tilde{R}_0$.

Let $m:=  \int_{\R^N} \dot{u}(t,\cdot)p(t,\cdot)$ (an easy integration by parts yields that this quantity does not depend on $t$). We will prove the first case in the Theorem, that is, if $m\geq c\|h\|_{L^q(\R^N)}$, then $u^h(t,x)\to 1$ as $t\to +\infty$ locally in $c$. We first use Theorem \ref{thm:Huskaunbounded} to get:
$$\begin{array}{rcl}
\dot{u}(t,x)&=&mv(t,x)+ \dot{u}(t,x)-mv(t,x)\\
&&\\
&\geq & mv(t,x)-\| \dot{u}(t,\cdot)-mv(t,\cdot)\|_{L^\infty(\R^N)}\\
&&\\
&\geq & mv(t,x)-C e^{-\gamma t}\| \dot{u}(1,\cdot)-mv(1,\cdot)\|_{L^\infty(\R^N)}\displaystyle\frac{\|v(t,\cdot)\|_{L^\infty(\R^N)}}{\|v(1,\cdot)\|_{L^\infty(\R^N)}}\\
&&\\
&= &\|v(t,\cdot)\|_{L^\infty(\R^N)}\Big( m\displaystyle\frac{v(t,x)}{\|v(t,\cdot)\|_{L^\infty(\R^N)}}-C e^{-\gamma t}\displaystyle\frac{\| \dot{u}(1,\cdot)-mv(1,\cdot)\|_{L^\infty(\R^N)}}{\|v(1,\cdot)\|_{L^\infty(\R^N)}}\Big).\\
\end{array}$$
As $m=  \int_{\R^N} \dot{u}(t,\cdot)p(t,\cdot)$, one has 
$$\begin{array}{rcl}
\| \dot{u}(1,\cdot)-mv(1,\cdot)\|_{L^\infty(\R^N)}&\leq& \| \dot{u}(1,\cdot)\|_{L^\infty(\R^N)}\big(1+\int_{\R^N} p(1,\cdot) \|v(1,\cdot)\|_{L^\infty(\R^N)}\big)\\
&&\\
&\leq &2\| \dot{u}(1,\cdot)\|_{L^\infty(\R^N)}\int_{\R^N} p(1,\cdot) \|v(1,\cdot)\|_{L^\infty(\R^N)}\\
\end{array}
$$
since $\int_{\R^N} p(1,\cdot)v(1,\cdot)=1$ (since this quantity is independent of $t$ and $\int_{\R^N} p(0,\cdot)v(0,\cdot)=1$ by normalization, see Proposition \ref{prop:defp}). 
Moreover, 
$$|\dot{u}(1,x)|\leq \frac{e^K}{(4\pi )^{N/2}}\int_{\R^N} e^{-\frac{|x-y|^2}{4}}|h(y)|dy\leq C_q\|h\|_{L^q(\R^N)},$$
where $K=\|\partial_u f\|_\infty$ and $C_q>0$ is a constant only depending on $q$, $K$ and $N$. 

We thus conclude that 
$$\dot{u}(t,x)\geq \|v(t,\cdot)\|_{L^\infty(\R^N)}\Big( m\frac{v(t,x)}{\|v(t,\cdot)\|_{L^\infty(\R^N)}}-2C C_q e^{-\gamma t} \int_{\R^N} p(1,\cdot) \|h\|_{L^q(\R^N)}\Big)$$
and as $m\geq c\|h\|_{L^q(\R^N)}$, one gets 
$$\dot{u}(t,x)\geq \|v(t,\cdot)\|_{L^\infty(\R^N)}\|h\|_{L^q(\R^N)}\Big( c\frac{v(t,x)}{\|v(t,\cdot)\|_{L^\infty(\R^N)}}-2CC_q e^{-\gamma t} \int_{\R^N} p(1,\cdot) \Big).$$

Lemma \ref{lem:cvphi} yields that there exists $T>0$ such that for all $t\geq T$ and $x\in B_R$:
$$\frac{v(t,x)}{\|v(t,\cdot)\|_{L^\infty(\R^N)}}\geq \frac{1}{2}\min_{B_R} \varphi.$$
Moreover, we could assume that $T$ is large enough so that 
$$2C e^{-\gamma T} C_q\int_{\R^N} p(1,\cdot) \leq  \frac{c}{4}\min_{B_R} \varphi.$$
It follows that 
$$\min_{B_R} \dot{u}(T,\cdot)\geq \frac{c}{4}\|v(T,\cdot)\|_{L^\infty(\R^N)}\min_{B_R} \varphi \|h\|_{L^q(\R^N)}.$$

As $f\in \mathcal{C}^2([0,1])$, there exists $M>0$ such that for all $t>0$, $x\in \R^N$, one has
$$| f\big( u^h(t,x)\big)-f\big(u(t,x)\big)-\partial_u f\big(x,u(t,x)\big)(u^h(t,x)-u(t,x))||\leq M|u^h(t,x)-u(t,x)|^2.$$
Let $z^h:= u^h-u- \dot{u}$. One has $z^h (0,\cdot)\equiv 0$ and 
$$\partial_t z^h - \Delta z^h-\partial_u f\big( x,u(t,x)\big)z^h = f(x, u^h)-f(x,u)-\partial_u f(x,u)(u^h-u).$$
Hence, 
$$|\partial_t z^h - \Delta z^h|\leq M|u^h-u|^2+K|z^h|,$$
where $K=\|\partial_u f\|_\infty$. Moreover, 
$$| u^h(t,x)-u(t,x)|\leq \frac{e^{Kt}}{(4\pi t)^{N/2}}\int_{\R^N} e^{-\frac{|x-y|^2}{4t}}h(y)dy\leq e^{Kt} C_q \|h\|_{L^q (\R^N)}.$$
It follows from comparison arguments that 
$$\|z^h(t,\cdot)\|_{L^\infty (\R^N)}\leq C_q M t^{1-N/2} e^{Kt}\|h\|_{L^q (\R^N)}^2.$$
Take $\e_q$ small enough such that 
$$ C_q M T^{1-N/2} e^{KT}\e_q\leq \frac{c}{8}\|v(T,\cdot)\|_{L^\infty(\R^N)}\min_{B_R} \varphi.$$
This defines $\e_q$. 
Then as $\|h\|_{L^q (\R^N)}<\e_q$, one has 
$$\|z^h(T,\cdot)\|_{L^\infty (\R^N)}\leq \displaystyle\frac{c }{8}\|v(T,\cdot)\|_{L^\infty(\R^N)}\|h\|_{L^q (\R^N)}\min_{B_R} \varphi $$ and thus for all $x\in B_R$:
$$\begin{array}{l}
h^T(x):=u^h (T,x)-u(T,x)\\
\\
=\displaystyle z^h (T,x)+\dot{u}(T,x)\geq \Big(-\frac{c}{8}\|v(T,\cdot)\|_{L^\infty(\R^N)}\min_{B_R} \varphi+\frac{c}{4}\|v(T,\cdot)\|_{L^\infty(\R^N)}\min_{B_R} \varphi\Big) \|h\|_{L^q (\R^N)}\\
\\
 =\displaystyle \frac{c}{8}\|v(T,\cdot)\|_{L^\infty(\R^N)}\|h\|_{L^q (\R^N)}\min_{B_R} \varphi.\\
 \end{array}$$
 Moreover, 
 $$|h^T(x)|=|u^h (T,x)-u(T,x)|\leq e^{KT} C_q \|h\|_{L^q (\R^N)}.$$
 Hence, $\|h^T\|_{L^\infty (\R^N)}<\e_\infty$ if $\|h\|_{L^q (\R^N)}<\e_q := e^{-KT}\e_\infty/ C_q $. 

We could thus apply Proposition \ref{prop:R}, with initial time at $t=T$. This yields that if we denote by $u_T^h$ the solution of 
$$\partial_t u_T^h- \Delta u_T^h =f(x,u_T^h) \quad \hbox{ on } (T,\infty) \times \R^N, \quad u_T^h (T,x)=u(T,x)+h^T(x)  \hbox{ for all  } x\in \R^N$$
then $\inf_{B_{R-1}} \big(u_T^h (t,\cdot)-u_T^0 (t,\cdot)\big)>0$ for all $t>0$. But, due to the definition of $h^T$, $u_T^h (T,\cdot)\equiv u^h (T,\cdot)$ and thus $u_T^h$ is nothing else but the function $u^h$. Hence, $\inf_{B_{R-1}} \big(u^h (t,\cdot)-u (t,\cdot)\big)>0$ for all $t>T$ and thus $\liminf_{t\to +\infty}\big(u^h (t,x)-W(x)\big)>0$ for all $x\in B_{R-1}$. As this is true for any $R$ large enough, this concludes the proof. 
\end{proof}


\subsection{Proof of Corollary \ref{cor:h}}

\begin{proof}[Proof of Corollary \ref{cor:h}.]
As $0\leq u \leq 1$, parabolic regularity estimates yield $\partial_t u \in L^2 \big( (0,T), L^2 (\R^N)\big)$ for all $T>0$. 
The Lebesgue differentiation  theorem yields that for a.e. $t_0\in [0,T)$, one has:
$$\frac{1}{\tau}\|u(\tau+t_0, \cdot)-u(t_0,\cdot)\|_{L^2 (\R^N)}\to \|\partial_t u(t_0,\cdot)\|_{L^2(\R^N)} \quad \hbox{ as } \tau\to 0$$

We take such a $t_0$. 
Assume by contradiction that $\int_{\R^N} p(t_0,x)\partial_t u(t_0,x)dx>0$. Let $c>0$ such that 
$$\int_{\R^N} p(t_0,x)\partial_t u(t_0,x)dx= 2c \|\partial_t u (t_0,\cdot)\|_{L^2 (\R^N)}.$$ 
Let $\e_2$ associated with $c$ as in Theorem \ref{thm:h}, but with initial time $t_0$ instead of $0$. 

For all $\tau>0$, we define $h^\tau:= u(\tau+t_0, \cdot)-u(t_0,\cdot)$. This is clearly an admissible perturbation since $0\leq u(\tau+t_0,\cdot)\equiv u(t_0,\cdot)+h\leq 1$. 
Clearly, $u^{h^\tau}\equiv u(\cdot+\tau+t_0,\cdot)$ and in particular, $u^{h^\tau}(t,x)\to W(x)$ as $t\to +\infty$ uniformly in $x\in \R^N$. 

On the other hand, $\frac{1}{\tau}\|h^\tau\|_{L^2 (\R^N)}\to \|\partial_t u(t_0,\cdot)\|_{L^2(\R^N)}$  
and $\int_{\R^N} p(t_0,x)h^\tau (x)dx\to \int_{\R^N} p(t_0,x)\partial_t u(t_0,x)dx$ as $\tau\to 0$. 
Hence, we can assume that $\tau$ is small enough so that 
$$\int_{\R^N} p(t_0,x) h^\tau (x)dx\geq c \|h^\tau\|_{L^2 (\R^N)} \quad \hbox{ and }  \|h^\tau\|_{L^2 (\R^N)} <\e_2.$$ 
As $c>0$, it would then follow that $u^{h^\tau}(t,x)\to 1$ as $t\to +\infty$ locally in $x\in \R^N$, a contradiction. Similarly, if $\int_{\R^N} p(t_0,x)\partial_t u(t_0,x)dx<0$, then $u^{h^\tau}(t,x)\to 0$ as $t\to +\infty$, providing the contradiction. Hence, $\int_{\R^N} p(t_0,x)\partial_t u(t_0,x)dx=0$, and this is true for a.e. $t_0\geq 0$. 
\end{proof}


\section{Derivation of the applications results}\label{sec:appli}

\begin{proof}[Proof of Proposition \ref{prop:app1}.] Let $r>0$, $L=L^*(r)$ the associated critical length. 
First, by definition of $L^* (r)$, one has $u_{L^*(r)}(t,x)\to W(x)$ as $t\to +\infty$ uniformly in $x\in \R$ for some positive, even solution $W\in H^1 (\R)$ of (\ref{eq:groundstate}). Hence, the hypotheses of Theorem \ref{thm:h} are satisfied and $p$ is well-defined. 

For all $\e>0$ small and $\nu\in \R$, we define $u^\e$ the solution of (\ref{eq:princ}) associated with the initial datum 
$$u^\e_0:= \left\{ \begin{array}{ll}1 &\hbox{ if }  r+\e<|x|<r+\e+L+\nu \e,\\
0 &\hbox{ otherwise}.\\
\end{array}\right.$$ 
We now denote $u_0:= 1_{(-L -r,-r)\cup (r,r+L)}$ and $u$ the associated solution. 

Define
$$h^\e:= u^\e_0-u_0=  \left\{ \begin{array}{ll}1 &\hbox{ if } L+r<|x|<r+\e+L+\nu \e,\\
-1 &\hbox{ if } r<|x|<r+\e,\\
0 &\hbox{ otherwise}.\\
\end{array}\right.$$ 

Assume first that $\nu>\frac{p^r(0,r)}{p^r(0,L+r)}-1$. Then 
$$\lim_{\e\to 0}\frac{1}{\e}\int_\R h^\e(x) p(0,x)dx= 2\nu p^r (0,L+r)-2p^r(0,r)>0.$$ 
On the other hand, $\|h^\e\|_{L^1 (\R^N)}=2(1+\nu)\e$. Let $c:= \big(\nu p^r (0,L+r)-p(0,r)\big)/(1+\nu)>0$. We could apply Theorem \ref{thm:h}: there exists $\e_1>0$ such that for all $\e$ such that $2(1+\nu)\e<\e_1$, one has 
$u^\e(t,x)\to 1$ locally uniformly in $x$ as $t\to +\infty$. By definition of $L^*(r+\e)$, this implies in particular that $L^*(r+\e)<\nu \e +L^*(r)$ for $\e$ small enough for all $\nu>\frac{p^r(0,r)}{p^r(0,L+r)}-1$.
Similarly, one can show that $L^*(r+\e)>\nu \e +L^*(r)$ for $\e$ small enough for all $\nu<\frac{p^r(0,r)}{p^r(0,L+r)}-1$. Analogous inequalities could be derived for $\e<0$. 
We conclude that $L^*$ is derivable on $(0,\infty)$ and $(L^*)'(r)=\frac{p^r(0,r)}{p^r(0,L^*(r)+r)}-1$. 
\end{proof}

%
%
%
%


\begin{proof}[Proof of Lemma \ref{lem:app1}.]
Consider a minimizing sequence $(u_0^n)_n$ in $\mathcal{B}$. As $0\leq u_0^n\leq 1$ for all $n$ and $\int_\R j(u_0^n)$ is bounded, we can assume that $\big(j(u_0^n)\big)_n$ converges weakly in the space of measures. Let $\beta:=\lim_{n\to+\infty}\int_\R j(u_0^n)$.

{\bf Step 1.}
Assume that there exists $\alpha \in (0,\beta)$ such that for all $\e \in (0,\beta-\alpha)$, there exist 
two compactly supported sequences $(u_{0,1}^n)_n$ and $(u_{0,2}^n)_n$ such that for $n$ large enough:
$$\|j(u_0^n)-j(u_{0,1}^n)-j(u_{0,2}^n)\|_{L^1 (\R)}<\e, \quad \big|\int_\R j(u_{0,1}^n)- \alpha\big|<\e$$
and $\lim_{n\to+\infty} d\big( supp u_{0,1}^n, supp u_{0,2}^n\big)=\infty$. 
For all $n$, let $u_1^n$ and $u_2^n$ the solution of (\ref{eq:princ}) associated respectively with the initial data $u_{0,1}^n$ and $u_{0,2}^n$. 

This part of the proof is now inspired by the proof of Theorem 2 in \cite{Garnier}. Let $v^n$ the solution associated with the initial datum $v^n_0:=u^n_0-u_{0,2}^n$. 
As $\alpha <\beta$ and $\e<\beta-\alpha$, one has $\int_\R j(v^n_0)<\beta$ and we know by definition of $\beta$ that $\lim_{t\to \infty} \sup_{x\in \R}v^n(t,x)=0$. Let $t^*>0$ such that $v^n(t^*,x)\leq \theta/4$ for all $x\in \R$, where $\theta:= \inf \{ s\in (0,1], f(s)>0\}$, which is well-defined since $f(1)=0$, and positive since $f'(0)<0$. 

Next, up to symmetrization, we can always assume that $a_n:=\sup  supp u_{0,1}^n\leq 0$ and $b_n:=\inf  supp \ u_{0,2}^n\geq 0$, and by translation we can assume that $b_n=-a_n$. As $\lim_{n\to+\infty} d\big( supp \ u_{0,1}^n, supp \ u_{0,2}^n\big)=\infty$, one has $b_n\to +\infty$ since $d\big( supp \ u_{0,1}^n, supp \ u_{0,2}^n\big)=2b_n$. 

Let $w^n=u^n-v^n$. One has $w^n(0,\cdot)\equiv u_{0,2}^n$ and 
$$w^n_t-w^n_{xx}=f(u^n)-f(v^n)\leq Kw^n,$$
where $K=\|f'\|_\infty$. 
Moreover, $supp w^n \subset (b_n,\infty)$, and thus $w^n (0,\cdot)$ converges weakly to $0$ as $n\to +\infty$ since $b_n\to +\infty$. It follows that the function $w^n (t,\cdot)$ converges locally uniformly to $0$ as $n\to +\infty$ for all $t>0$. In particular, 
$w^n(t^*,0)\leq \theta/4$ for $n$ large enough. Moreover, for all $t>0$, $w^n(t,\cdot)$ is increasing over $\R^-$ by Lemma 2.1 of \cite{DuMatano}, and thus $w^n(t^*,x)\leq \theta/4$ for all $x\leq 0$. It follows that $u^n(t^*,x)=w^n(t^*,x)+v^n(t^*,x)\leq \theta/2$ for all $x\leq 0$. One can prove using $u_1^n$ instead of $u_2^n$ that $u^n(t^*,x)\leq \theta/2$ for all $x\geq 0$. Hence $u^n(t^*,\cdot )\leq \theta/2$ over $\R$.

Consider the solution $\mathcal{N}$ of $\mathcal{N}'=f(\mathcal{N})$, $\mathcal{N}(0)=\theta/2$. As $f(s)<0$ for all $s\in (0,\theta)$, one has $\mathcal{N}(t)\to 0$ as $t\to +\infty$. Thus, as $u^n(t,\cdot)\leq \mathcal{N}(t)$ for all $t\geq t^*$, we conclude that $u^n(t,x)\to 0$ as $t\to +\infty$ uniformly over $\R$, for any $n$ large enough. 
This is a contradiction. Hence, dichotomy is discarded.

\smallskip

{\bf Step 2.}
Assume that  $(u_0^n)_n$ vanishes, in the sense that for all $R>0$:
$$\sup_{y\in \R} \int_{B_R+y}j( u_0^n) \to 0 \quad \hbox{ as } n\to +\infty.$$

Let $K=\|f'\|_\infty$. Let $\e>0$ such that $j(s)>\e$ for all $s>2\theta e^{-K} /3$. Take $R$ large enough so that 
$$\frac{1}{\sqrt{4\pi}}\int_{|x|>R}e^{-|x|^2/4}dx<\e/(2j(1)).$$
Assume that $n$ is large enough so that 
$$\frac{1}{\sqrt{4\pi}}\sup_{y\in \R} \int_{B_R+y} j(u_0^n)  \leq \e/2.$$
We now notice that 
$$u^n_t-u^n_{xx}=f(u^n)\leq Ku^n \hbox{ in } (0,\infty)\times \R.$$
It follows that, as $u_0^n\leq 1$, using Jensen's inequality and the convexity of $j$:
$$\begin{array}{rcl} 
 j\big( e^{-K}u^n(1,y)\big)&\leq&  j\Big(\frac{1}{\sqrt{4\pi }}\int_\R e^{-|y-z|^2/4}u_0^n (z)dz\Big)\\
&&\\
&\leq&  \frac{1}{\sqrt{4\pi }}\int_\R e^{-|y-z|^2/4}j\big(u_0^n (z)\big) dz\\
&&\\
&\leq &\frac{1}{\sqrt{4\pi }}\int_{|z-y|\leq R} j\big(u_0^n (z)\big)dz+\frac{1}{\sqrt{4\pi }}\int_{|z-y|>R} e^{-|y-z|^2/4}j(1)dz\\
&&\\
&\leq &\e\\ \end{array}$$
for $n$ large enough. It follows from the definition of $\e$ that $e^{-K}u^n(1,y)\leq 2\theta e^{-K} /3$. 
Comparing with the solution $\mathcal{N}$ of $\mathcal{N}'=f(\mathcal{N})$, $\mathcal{N}(0)=2\theta/3$, we conclude that $u^n(t,y)\to 0$ as $t\to +\infty$ for $n$ large enough uniformly in $y\in \R$. This is a contradiction since $u_0^n\in \mathcal{B}$. Hence, vanishing is discarded.

\smallskip

{\bf Step 3.} We conclude from Lions' concentration-compactness method \cite{Lions} that, up to translation, for all $\e>0$, there exists $R>0$ such that $\int_{B_R} j(u^n_0)\geq \beta - \e$ for all $n$ large enough. 
As $0\leq j(u_0^n)\leq j(1)$ for all $n$, it follows that $\big( j(u_0^n)\big)_n$ converges, up to extraction, in $L^1$ (see \cite{Lions}). 
As $j$ is bijective on $[0,\infty)$, let $j(\underline{u}_0)$ its limit.

It is only left to prove that if $\underline{u}$ is the solution of (\ref{eq:princ}) associated with the initial datum $\underline{u}_0$, then $\limsup_{t\to+\infty} \sup_{x\in \R}\underline{u}(t,x)>0$. Assume by contradiction that $\underline{u}(t,\cdot)$ converges uniformly to $0$ as $t\to+\infty$. Let $t^*$ such that $\underline{u}(t^*,x)\leq \theta/4$ for all $x\in \R$. 
 Let $\e>0$ such that $j(s)> \frac{\e}{\sqrt{4\pi t^*}}$ for all $s>\theta e^{-Kt^*} /4$. Take $n$ large enough so that $\|j(u_0^n)-j(\underline{u}_0)\|_{L^1 (\R)}\leq \e$. We notice that, as $j$ is convex, nondecreasing and $j(0)=0$, one has $j(u+v)\geq j(u)+j(v)$ for all $u,v\geq 0$, from which we could easily derive that $j(|u-v|)\leq |j(u)-j(v)|$ for all $u,v\geq 0$. It follows that 
$$\begin{array}{rcl} 
j\Big(e^{-K t^*} |u^n(t^*,x)-\underline{u}(t^*,x)|\Big)&\leq & j\Big( \frac{1}{\sqrt{4\pi t^*}}\int_\R e^{-\frac{|x-y|^2}{4t^*}}|u^n_0(y)-\underline{u}_0(y)|dy \Big)\\
&&\\
&\leq & \frac{1}{\sqrt{4\pi t^*}}\int_\R e^{-\frac{|x-y|^2}{4t^*}}j\big( |u^n_0(y)-\underline{u}_0(y)|\big) dy \quad \hbox{(Jensen inequality)}\\
&&\\
&\leq & \frac{1}{\sqrt{4\pi t^*}}\int_\R e^{-\frac{|x-y|^2}{4t^*}}|j\big( u^n_0(y)\big)-j\big(\underline{u}_0(y)\big)| dy \\
&&\\
&\leq &  \frac{\e}{\sqrt{4\pi t^*}}.\\
\end{array}$$
It follows from the definition of $\e$ that $|u^n(t^*,x)-\underline{u}(t^*,x)|\leq \theta/4$ and thus $u^n(t^*,x)\leq \theta/2$ for all $x\in \R$. By comparison with $\mathcal{N}$, one gets $\lim_{t\to +\infty} u^n(t,x)=0$ uniformly in $x\in \R$. This contradicts $u^n_0\in \mathcal{B}$. We conclude that $\underline{u}_0\in \mathcal{B}$. 
\end{proof}

\begin{proof}[Proof of Proposition \ref{prop:bathtub}.]
First, by (\ref{hyp:f}) and (\ref{hyp:fbist1}), as $\underline{u}_0\in L^1 (\R)$, it follows from  \cite{MatanoPolacik} applied to $\underline{u}(1,\cdot)$ that $\underline{u}(t,\cdot)$ converges as $t\to +\infty$, either locally uniformly to $1$,  uniformly to $0$, or uniformly to a positive solution $W\in H^1 (\R)$ of (\ref{eq:groundstate}). The limit $0$ is excluded since $\underline{u}_0 \in \mathcal{B}$. 

Assume by contradiction that $\underline{u}(t,\cdot)\to 1$  as $t\to +\infty$, locally uniformly in $x$. Then it is well-known (see Lemma 4.2 of \cite{DuMatano} for example) that under hypotheses (\ref{hyp:f}) and (\ref{hyp:fbist1}), for any $\alpha \in (\beta^*,1)$, there exists $R_\alpha$ such that the solution $u^\alpha$ associated with the initial datum $\alpha \mathbf{I}_{(-R_\alpha, R_\alpha)}$ converges to $1$ as $t\to +\infty$ locally uniformly in $x$. Take $T>0$ such that  $\underline{u}(t,x)\geq (1+\alpha)/2$  for all $t>T$ and $x\in (-R_\alpha, R_\alpha)$. Let $h\in L^1 (\R)$ a nonnegative function such that $\underline{u}_0\geq h$, and $\frac{e^{KT}}{\sqrt{4\pi T}}\|h\|_{L^1 (\R)}<(1-\alpha)/2$, with $K=\|f'\|_\infty$. Let $u^h$ the solution associated with $\underline{u}_0- h$. Using arguments that have already been developed in this paper, one easily gets 
$$\|u^h(T,\cdot)-\underline{u}(T,\cdot)\|_{L^\infty(\R)}\leq \frac{e^{KT}}{\sqrt{4\pi T}}\|h\|_{L^1 (\R)}<(1-\alpha)/2.$$
In particular, $u^h(T,x)\geq \alpha $ on $(-R_\alpha,R_\alpha)$ and thus $u^h(t,x)\to 1$ as $t\to +\infty$ locally in $x$. But as $\int_\R j\big(u^h(0,\cdot)\big)<\int_\R j(\underline{u}_0)$, this would contradict the minimality of $\underline{u}_0$.

Hence the only possible choice is $\underline{u}(t,x)\to W(x)$ uniformly in $x$ as $t\to +\infty$ and thus the hypotheses of 
Theorem \ref{thm:h} are satisfied. We can thus define $p$. 

Define the set $\mathcal{S}^0=\{x\in \R, \overline{u}_0(x)<1\}$ (resp. $\mathcal{T}^0=\{x\in \R, \overline{u}_0(x)>0\}$) and for every $k=1,2,\dots $ the set $\mathcal{S}^0_k=\{x\in \R, 0\leq \overline{u}_0(x)<1-\frac{1}{k}\}$ (resp.  $\mathcal{T}^0_k=\{y\in \R, 0\leq \overline{u}_0(y)>\frac{1}{k}\}$); 
As $\overline{u}_0\in L^1 (\R)$, one has $\overline{u}_0\not\equiv 1$ and thus $meas \mathcal{S}^0>0$. Also, 
$\overline{u}_0\not\equiv 0$, otherwise one would have $u(t,\cdot)\equiv 0$ for all $t>0$, contradicting $ \limsup_{t\to+\infty} \sup_{x\in \R}u(t,x)>0$, and thus $meas \mathcal{T}^0>0$

The Lebesgue density theorem yields that for almost every $x^* \in \mathcal{S}^0_k,  y^* \in \mathcal{T}^0_k$, there exists $r$ sufficiently small such that ${\frac{\mu(B(x^*,r))}{\mu(B(x^*,r)\cap \mathcal{S}_k^0)}}<2$ and ${\frac{\mu(B(y^*,r))}{\mu(B(y^*,r)\cap \mathcal{T}_k)}}<2$. Take $\lambda>1$, and define
	    \begin{equation*}
		    h(x):= \frac{meas(B(x^*,r))}{meas(B(x^*,r)\cap \mathcal{S}_k^0)} \frac{\mathbf{I}_{B(x^*,r)\cap \mathcal{S}_k^0}}{j'(\underline{u}_0)}-\lambda\frac{meas(B(y^*,r))}{meas(B(y^*,r)\cap \mathcal{T}_k)} \frac{\mathbf{I}_{B(y^*,r)\cap \mathcal{T}_k}}{j'(\underline{u}_0)},
	    \end{equation*}
	    where $\mathbf{I}$ is the indicatrix function. 
Then for $0<\e<\frac{1}{2k}$, one has $0\leq \underline{u}_0+\e h\leq 1$. As $\lambda>1$, one has $\int_\R j'(\underline{u}_0) h<0$ and thus $\int_\R j(\underline{u}_0+\e h)<\int_\R j(\underline{u}_0)$. The minimality of $\underline{u}_0$ yields that $\underline{u}_0+h\not\in \mathcal{A}$, that is, 
the solution $u^h$ associated with the initial datum $\underline{u}_0+\e h$ converges uniformly to $0$ as $t\to +\infty$. 

Assume by contradiction that $\int_\R p(0,x)h(x)>0$. Then Theorem \ref{thm:h} yields that $u^{\e h}(t,x)\to 1$ locally uniformly in $x$ as $t\to +\infty$, a contradiction. Hence $\int_\R p(0,x)h(x)<0$, that is:
$$\frac{1}{meas(B(x^*,r)\cap \mathcal{S}_k^0)} \int_{B(x^*,r)\cap \mathcal{S}_k^0} \frac{p(0,x)}{j'(\underline{u}_0 (x))}dx \leq \lambda\frac{1}{meas(B(y^*,r)\cap \mathcal{T}_k)} \int_{B(y^*,r)\cap \mathcal{T}_k} \frac{p(0,x)}{j'(\underline{u}_0 (x))}dx.$$
Letting $r\to 0$, this yields $ \frac{p(0,x)}{j'(\underline{u}_0 (x))}\leq \lambda  \frac{p(0,y)}{j'(\underline{u}_0 (y))}$ for all $\lambda>1$. As this is true for almost every $x\in \mathcal{S}^0_k$ and $y\in \mathcal{T}^0_k$, and as $\mathcal{S}^0=\cap_{k=1}^{\infty}\mathcal{S}^0_k$ and  $\mathcal{T}^0=\cap_{k=1}^{\infty}\mathcal{T}^0_k$,we have thus proved that 
$$  \frac{p(0,x)}{j'(\underline{u}_0 (x))}\leq   \frac{p(0,y)}{j'(\underline{u}_0 (y))}  \hbox{ for almost every } x\in \mathcal{S}^0, y\in \mathcal{T}^0.$$
As $p(0,\cdot)$ is continuous, the conclusion follows by letting $c:=\sup_{\mathcal{S}^0} \frac{p(0,\cdot)}{j'(\underline{u}_0)}=\inf_{\mathcal{T}^0} \frac{p(0,\cdot)}{j'(\underline{u}_0)}$ since $\mathcal{S}^0\cup \mathcal{T}^0= \R^N$. 
\end{proof}

\begin{proof}[Proof of Corollary \ref{cor:app1}.]
Assume by contradiction that  $W=W(x)$ is a minimizer of $u_0\mapsto \int_\R u_0$ over $\mathcal{B}$. If $u_0=W$, then, clearly, $u(t,x)=W(x)$ for all $(t,x)\in (0,\infty)\times \R$, and $p(t,x)=C\varphi (x)e^{\lambda t}$ for some constant $C>0$ by uniqueness. As $0<W<1$ over $\R$, Proposition \ref{prop:bathtub} would give $p(0,\cdot)\equiv c$ over $\R$ for some $c>0$. In other words, $\varphi$ would be constant, and thus $f'(W)$ would be constant over $\R$, equal to $f'(0)<0$ since $W(x)\to 0$ as $|x|\to +\infty$. This would give $f(s)=f'(0)s$ for all $s\in [0,\max_\R W]$. On the other hand, equation (\ref{eq:groundstate}) yields that if $W(\overline{x})=\max_\R W$, then 
$f\big(W(\overline{x}\big)=-\Delta W (\overline{x})\geq 0$, contradicting $f'(0)<0$. 
\end{proof}



\bibliographystyle{plain}

\bibliography{biblio}


\end{document}